\providecommand{\U}[1]{\protect\rule{.1in}{.1in}}
\newtheorem{theorem}{Theorem}
\newtheorem{lemma}{Lemma}
\newtheorem{proposition}{Proposition}
\newcommand{\E}{{\mathbb{E}}}
\begin{document}

\title{Analysis of a Splitting Estimator for Rare Event Probabilities in Jackson Networks}
\author{Jose Blanchet, Kevin Leder and Yixi Shi}
\maketitle

\begin{abstract}
We consider a standard splitting algorithm for the rare-event simulation of
overflow probabilities in any subset of stations in a Jackson network at level
$n$, starting at a fixed initial position. It was shown in \cite{DeanDup09}
that a subsolution to the Isaacs equation guarantees that a subexponential
number of function evaluations (in $n$) suffice to estimate such overflow
probabilities within a given relative accuracy. Our analysis here shows that
in fact $O\left(  n^{2\beta+1}\right)  $ function evaluations suffice to
achieve a given relative precision, where $\beta$ is the number of bottleneck
stations in the network. This is the first rigorous analysis that allows to
favorably compare splitting against directly computing the overflow
probability of interest, which can be evaluated by solving a linear system of
equations with $O(n^{d})$ variables.

\end{abstract}





\section{Introduction\label{intro}}

The development of rare-event simulation algorithms for overflow probabilities
in stable open Jackson networks has been the subject of a substantial amount
of papers in the literature during the last decades (see Section 2 for the
specification of an open Jackson network). A couple of early references on the
subject are \cite{PARWAL89} and \cite{ANAHEITSO90}. Subsequent work which has
also been very influential in the development of efficient algorithms for
overflows of Jackson networks include \cite{Villen94},
\cite{glasserman99multilevel}, \cite{GLAKOU95}, \cite{KroeseNicola},
\cite{JunejaNicola05}, \cite{DUPSEZWAN07}, \cite{NicolaZ07}, \cite{DupWang08}
and \cite{DeanDup09}. The survey papers of \cite{JUNSHA06} and \cite{BMRT09}
provide additional references on this topic.

The two most popular approaches that are applied to the construction of
efficient rare-event simulation algorithms are importance sampling and
splitting (see \cite{ASGLYNN}). Importance sampling involves simulating the
system under consideration (in our case the Jackson network) according to a
different set of probabilities in order to induce the occurrence of the rare
event. Then, one attaches a weight to each simulation corresponding to the
likelihood ratio of the observed outcome relative to the nominal / original
distribution. In splitting, on the other hand, there is no attempt to bias the
behavior of the system. Instead, the idea is to decompose the occurrence of
the rare event of interest (in our case overflow in a Jackson network) into a
sequence of nested \textquotedblleft milestone\textquotedblright\ events whose
subsequent occurrence is not rare. The rare event occurs when the last of the
milestone events occurs. The idea is to keep splitting the particles as they
reach subsequent milestones. Of course, each particle is attached a weight
corresponding to the total number of times it has split so that the overall
estimation (which is the sum of the weights corresponding to the particles
that make it to the last milestone) provides an unbiased estimator of the
probability of interest.

The most popular performance measure for efficiency analysis of rare-event
simulation algorithms for Jackson networks corresponds to that of
\textquotedblleft asymptotic optimality\textquotedblright\ or
\textquotedblleft weak efficiency\textquotedblright. In order to both explain
the computational complexity implied by this notion and to put in perspective
our contributions let us discuss the class of problems we are interested in.
Starting from any fixed state, we consider the problem of computing the
probability that the total number of customers in any fixed set of stations in
the network reaches level $n$ prior to reaching the origin. That is, the
probability that the sum of the queue lengths in any given set of stations
reaches level $n$ within a busy period. The number of stations in the whole
network is assumed to be $d$ and the number of bottleneck stations (i.e.
stations with the maximum traffic intensity in equilibrium) is $\beta$.

Weak efficiency guarantees that a subexponential number of replications (as a
function of the overflow level, say $n$) suffices to compute the underlying
overflow probability of interest within a given relative accuracy. In
contrast, as we shall explain in Section 2, overflow probabilities in the
setting of Jackson networks can be computed by solving a linear system of
equations with $O(n^{d})\footnote{Given two non-negative functions $f\left(
\cdot\right)  $ and $g\left(  \cdot\right)  $, we say $f\left(  n\right)
=O\left(  g\left(  n\right)  \right)  $ if there exists $c,n_{0}\in(0,\infty)$
such that $f\left(  n\right)  \leq cg\left(  n\right)  $ all $n\geq n_{0}$.}$
unknowns. It is well known that Gaussian elimination then takes $O\left(
n^{3d}\right)  $ operations to find an exact solution to such linear system.
Moreover, since in our case the associated linear system has some sparsity
properties the linear equations can be solved in as many as $O\left(
n^{3d-2}\right)  $ operations (see the discussion in Section 2). Our analysis
for the solution of the associated linear system of equations is not intended
to be exhaustive. Our objective is simply to make the point that naive Monte
Carlo (which indeed takes an exponential number of replication in $n$ to
achieve a given relative accuracy) is not the natural benchmark that one
should be using in order to test the performance of an efficient simulation
estimator for overflows in Jackson networks. Rather, a more natural benchmark
is the application of a straightforward method for solving the associated
system of linear equations. It would be interesting to provide a detailed
study of various methods for solving linear system of equations (such as
multigrid procedures) that are suitable for our environment and even combine
them with the ideas behind efficient simulation procedures. This, however,
would be the subject of an entire paper and therefore is left as a topic for
future research.

Our goal here is to analyze a class of splitting algorithms similar to those
introduced in \cite{Villen94} for the evaluation of overflow probabilities at
level $n$. Further analysis was given in \cite{DeanDup09}, where the authors
provide necessary and sufficient conditions for the design of the
\textquotedblleft milestone events\textquotedblright\ in order to achieve
subexponential complexity in $n$. Our contribution is to show that if the
milestone events are properly placed as suggested by \cite{DeanDup09}, the
splitting algorithm requires $O\left(  n^{2\beta+1}\right)  $ function
evaluations to achieve a fixed relative error. Since clearly the number of
bottleneck stations $\beta$ is at most $d$, the complexity of splitting is
$O\left(  n^{2d+1}\right)  $, which is substantially smaller than that of the
direct solution of the associated linear system. Our analysis therefore
provides theoretical justification for the superior performance observed when
applying splitting algorithms compared to directly solving the associated
linear system.

We believe that our results shed light into the type of performance that can
be expected when applying particle algorithms beyond the setting of Jackson
networks. This feature should be emphasized, specially given the fact that a
linear time algorithm for computing overflows in Jackson networks has been
developed very recently (see \cite{Blanchet09}). Contrary to particle methods,
which are versatile and that can in principle be applied in great generality,
the algorithm in \cite{Blanchet09} takes advantage of certain properties of
Jackson networks which are not shared by all classes of systems.

In addition, our results also provide interesting connections to recent
performance analysis studied in the context of state-dependent importance
sampling algorithms for a class of Jackson networks. These connections might
eventually help guide the users of rare event simulation algorithms decide
when to apply importance sampling or splitting. For instance, consider the
overflow at level $n$ of the total population of a tandem network with $d$
stations. The work of \cite{DUPSEZWAN07} proposes an importance sampling
estimator based on the subsolution of an associated Isaacs equation. In
particular, \cite{DUPSEZWAN07} shows that if exponential tiltings are applied
using the gradient of the associated subsolution as the tilting parameter
(depending on the current state), the corresponding algorithm is weakly
efficient. Turns out that there are many subsolutions that can be constructed
varying certain so-called \textquotedblleft mollification
parameters\textquotedblright. A recent analysis based on Lyapunov inequalities
given in \cite{BlanLedGlyn09} shows that a natural selection of mollification
parameters guarantees $O\left(  n^{2(d-\beta)+1}\right)  $ function
evaluations to achieve a given relative error. Our analysis here therefore
guarantees that one can achieve a running time of order $O\left(
n^{d+1}\right)  $ if one chooses importance sampling when there are more than
$d/2$ bottleneck stations in the network and splitting if there are less than
$d/2$ bottleneck stations. Although our analysis is still not sharp we believe
that our results provide a significant step in order to understand the
connections between splitting and importance sampling.

The rest of the paper is organized as follows. A brief discussion on
complexity and efficiency considerations is given in Section \ref{complexity}.
Then we discuss the necessary large deviations asymptotics for Jackson
networks required for our analysis in Section \ref{dynamics}. The introduction
of the splitting algorithm as well as connections to the theory developed in
\cite{DeanDup09} is given in Section \ref{splitting}. Our complexity analysis
is finally given in Section \ref{analysis}.

\section{Complexity and Efficiency}

\label{complexity} We shall review concepts of efficiency and complexity in
rare event simulation. We start our discussion in the context of a generic
class of rare event simulation problems. Consider a sequence of events
$\{E_{n},n=1,2...\}$ with $p_{n}\triangleq{\mathbb{P}}\left(  E_{n}\right)
\rightarrow0$ as $n\rightarrow\infty$ (Without loss of generality, we might
assume that $p_{n}\rightarrow0$ exponentially fast as $n\nearrow0$.) The
design of an efficient rare-event simulation algorithm is typically associated
with the construction of an unbiased estimator, say $\hat{p}_{n}$, such that
$p_{n}=\mathbb{E}\left[  \hat{p}_{n}\right]  $. A number of $m$ i.i.d.
replications $\{\hat{p}_{n}^{\left(  1\right)  },...,\hat{p}_{n}^{\left(
m\right)  }\}$ is produced, the average of which forms an estimate of $p_{n}$,
namely
\[
\hat{p}_{n}\left(  m\right)  =\frac{1}{m}\sum_{j=1}^{m}\hat{p}_{n}^{\left(
j\right)  }.
\]
By virtue of Chebyshev's inequality we obtain the following property for the
relative error, $|\hat{p}_{n}\left(  m\right)  -p_{n}|/p_{n}$, of the
estimate
\begin{equation}
{\mathbb{P}}\left(  |\hat{p}_{n}\left(  m\right)  -p_{n}|/p_{n}>\epsilon
\right)  \leq\frac{\text{Var}\left(  \hat{p}_{n}\right)  }{mp_{n}^{2}%
\epsilon^{2}}. \label{eqn:rel error bdd}%
\end{equation}
Hence, for a pre-determined upper bound $\epsilon$ of relative error, if we
choose the number of replications $m$ such that
\begin{equation}
m\geq\epsilon^{-2}\delta^{-1}\left(  cv_{n}\right)  ^{2},
\label{eqn: num of replication}%
\end{equation}
where $cv_{n}^{2}=\text{Var}\left(  \hat{p}_{n}\right)  /p_{n}^{2}$ is the
squared coefficient of variation of $\hat{p_{n}}$, we can guarantee that the
relative error is no larger than $\epsilon$ with probability at least
$1-\delta$.

Equation (\ref{eqn: num of replication}) stipulates that $m$ needs to grow at
least at the same rate as $cv_{n}^{2}$ does in order to keep the relative
error within a desirable threshold. If $cv_{n}^{2}$ grows at a subexponential
rate (i.e. if $\log\left(  cv_{n}\right)  ^{2}=o\left(  \log p_{n}\right)
,\,\text{as }n\nearrow\infty$) the estimator is said to be \emph{asymptotic
optimality, logarithmical efficient} or \emph{weakly efficient}. In this case,
the number of replication needs to increase subexponentially in $n$ to achieve
a prescribed level of relative accuracy. The name \textquotedblleft asymptotic
optimality\textquotedblright\ is derived from the fact that weak efficiency
implies that the exponential rate of decay to zero of the $\mathbb{E}\hat
{p}_{n}^{2}$ coincides with that of $p_{n}^{2}$ and therefore is maximal (by
virtue of Jensen's inequality).\newline\indent Obviously, one has to keep in
mind that weak efficiency measures the optimality of the estimator for a given
level of computational budget. Coming to the splitting algorithm, it is
apparent that the computational effort varies drastically with the degree of
splitting performed; one must therefore take into account the cost involved in
generating each replication of $\hat{p}_{n}$. We measure such cost in terms of
the number of elementary function evaluation which we will take to be simple
addition, multiplication, comparison, and the generation of a single uniform
random variable. When we incorporate the computational cost per replication of
the estimator, (\ref{eqn: num of replication}) says that the total number of
function evaluations needed has to keep pace with the work-normalized squared
coefficient of variation, i.e., $cv_{n}^{2}\cdot N_{n}$, where $N_{n}$ is the
cost per replication of $\hat{p}_{n}$. We will show in section \ref{analysis}
that $N_{n}$ is closely related to the expected total number of the survival
particles in a single run of the Splitting algorithm. \newline\indent Coming
back to the setting of Jackson networks. It is important to recognize that
overflow probabilities in such a setting can be obtained by solving a system
of linear equations. So, a reasonable benchmark for any simulation based
algorithm to be regarded \textquotedblleft efficient\textquotedblright\ is
indeed how fast one can solve such a system of linear equation by a direct
procedure. Jackson networks are basically multidimensional simple random walks
with constrained behavior on the boundaries. In particular, they are Markov
chains living on a countable state-space. The overflow probabilities can be
conveniently expressed as first passage time probabilities, which in turn can
be characterized as the solution to certain linear system of equations thanks
to its countable state-space Markov chain structure. We shall quickly review
how to obtain such linear system for a generic Markov chain $Q=\{Q_{k}%
:k\geq0\}$ living on a countable state-space $\mathcal{S}$ with transition
matrix $\{K\left(  x,y\right)  :x,y\in\mathcal{S}\}$. Let $A,\,B$ be two
disjoint subsets of $\mathcal{S}$, define $T_{A}\triangleq\inf\{k\geq0:X\in
A\}$, $T_{B}\triangleq\inf\{k\geq0:X\in B\}$ and put $p\left(  x\right)
={\mathbb{P}}_{x}\left(  T_{A}\leq T_{B}\right)  $. A simple conditioning
argument on the first transition leads to
\begin{equation}
p\left(  x\right)  =\sum_{y\in\mathcal{S}}K\left(  x,y\right)  p\left(
y\right)  \label{eqn:system of eqn}%
\end{equation}
subject to the boundary conditions
\[
p\left(  x\right)  =1\text{ for }x\in A,\quad p\left(  x\right)  =0\text{ for
}x\in B.
\]
In fact, $p\left(  \cdot\right)  $ is the minimum non-negative solution to the
above system (see \cite{BG07AAP}).

Now, if $Q$ describes the state of the embedded discrete time Markov chain
corresponding to a Jackson network with $d$ stations then $\mathcal{S}%
=\mathcal{Z}_{+}^{d}$. The transition dynamics of a Jackson network are
specified as follows (see \cite{RobertQN} p. 92). Inter-arrival times and the
service times are all independent and exponentially distributed random
variables. The arrival rates are given by the vector $\lambda=\left(
\lambda_{1},...,\lambda_{d}\right)  ^{T}$ and service rates are given by
$\mu=\left(  \mu_{1},...,\mu_{d}\right)  ^{T}$. (By convention all of the
vectors in this paper are taken to be column vectors and $^{T}$ denotes
transposition.) A job that leaves station $i$ joins station $j$ with
probability $P_{i,j}$ and it leaves the system with probability%
\[
P_{i,0}\triangleq1-\sum_{j=1}^{d}P_{i,j} .
\]
The matrix $P=\{P_{i,j}:1\leq i,j\leq d\}$ is called the routing matrix. We
shall consider open Jackson networks, which satisfy the following conditions:

\bigskip

\begin{enumerate}
\item[i)] $\forall i$, either ${\lambda}_{i}>0$ or ${\lambda}_{j_{1}}%
P_{j_{1}j_{2}}...P_{j_{k}i}>0$ for some $j_{1},...,j_{k}$.

\item[ii)] $\forall i$, either $P_{i0}>0$ or $P_{ij_{1}}P_{j_{1}j_{2}%
}...P_{j_{k}0}>0$ for some $j_{1},...,j_{k}$.

\item[iii)] The network is stable (i.e. a stationary distribution exists).
\end{enumerate}

\bigskip

These conditions simply require that each station will receive jobs either
directly from the outside or routed from other stations, and each job will
leave the system eventually. Our main interest lies in the evaluation of
$p_{n}\left(  x\right)  $ assuming that $B=\{0\}$ and $A_{n}=\{y:v^{T}y=n\}$
where $v$ is a binary vector which encodes a particular subset of the network
(i.e., the $i$-th position of the vector $v$ is $1$ if station $i$ falls in
the subset of interest, and $0$ otherwise). We shall denote by $V\left(
x\right)  =x^{T}v$ the mapping recording the total population in the stations
corresponding to the vector $v$. The case in which $v=\mathbf{1}=\left(
1,1,...,1\right)  ^{T}$ corresponds to the total population of the system. So,
$p_{n}\left(  x\right)  $, or more precisely $p_{n}^{V}\left(  x\right)  $,
corresponds to the overflow probability in the subset encoded by $v$ within a
busy period and starting from $x$ and that. In this setting, it follows (as we
shall review in the next section) that $p_{n}^{V}\left(  x\right)
\longrightarrow0$ exponentially fast in $n$ as $n\nearrow\infty$ and the
system of equations (\ref{eqn:system of eqn}) has $O\left(  n^{d}\right)  $
unknowns. Gaussian elimination requires $O\left(  n^{3d}\right)  $ function
evaluations to find the solution of such system. But since each state of the
Markov chain in this case has possible interactions with only a small fraction
of the entire state-space, it is therefore possible to permutate the states
(say in lexicographic order) so that the system is banded (i.e. the associated
matrix is sparse in the sense that its non-zero entries fall to a diagonal
band.) One can show that the bandwidth is $O\left(  n^{d-1}\right)  $, and
therefore solving such a banded linear system requires $O\left(  n^{d}%
\cdot\left(  n^{d-1}\right)  ^{2}\right)  =O\left(  n^{3d-2}\right)  $
operations (see, e.g., \cite{AG94}). \newline\indent Estimators that possess
weak efficiency (in a work-normalized sense) are guaranteed to run at
subexponential complexity. When comparing to the above \emph{polynomial}
algorithms of solving systems of linear equations, the efficiency analysis of
such estimators appears to be insufficient. We will show in later analysis
that the multilevel Splitting algorithm suggested by Dean and Dupuis
\cite{DeanDup09}, applied to estimate the overflow probabilities in Jackson
networks, requires fewer function evaluations than directly solving the
associated system of linear equations.

\section{Jackson Networks: Notation and Properties}

\label{dynamics} As we mentioned in the previous section, a Jackson network is
encoded by two vectors of arrival and service rates, ${\lambda}=\left(
{\lambda}_{1},...,{\lambda}_{d}\right)  ^{T}$ and $\mu=\left(  \mu_{1}%
,...,\mu_{d}\right)  ^{T}$, together with a routing matrix $P=\{P_{i,j}:i\leq
i,j\leq d\}$. Without loss of generality, we assume that $\sum_{i=1}%
^{d}\left(  {\lambda}_{i}+\mu_{i}\right)  =1$. The network is assumed to be
open and stable so conditions i), ii), and iii) described in the previous
section are in place.

Given the stability assumption, the system of equations given by
\begin{equation}
\phi_{i}={\lambda}_{i}+\sum_{j=1}^{d}\phi_{j}P_{ji},\quad\quad\forall
i=1,2,...,d
\end{equation}
admits a unique solution $\phi={\lambda}^{T}\left(  I-P\right)  ^{-1}$ (see
\cite{ASGLYNN}). The traffic intensity at station $i$ in the system in
equilibrium is given by $\rho_{i}$ which is defined by%
\begin{equation}
\rho_{i}=\frac{\phi_{i}}{\mu_{i}}=\frac{\left[  {\lambda}^{T}\left(
I-P\right)  ^{-1}\right]  _{i}}{\mu_{i}} \label{traffic intensity}%
\end{equation}
and satisfies $\rho_{i}\in\left(  0,1\right)  $ for all $i=1,2,...,d$. Define
$\rho_{\ast}=\max_{1\leq i\leq d}\rho_{i}$ and let $\beta$ be the cardinality
of the set $\{i:\rho_{i}=\rho_{\ast}\}$.

We shall study the queueing network by means of the embedded discrete time
Markov chain $Q=\{Q(k):k\geq0\}$, where $Q(k)=\left(  Q_{1}(k),\ldots
,Q_{d}(k)\right)  $. For each $k$, $Q_{i}(k)$ represents the number of
customers in station $i$ immediately after the $kth$ transition epoch of the
system. As mentioned before, the process $Q$ lives in the space $\mathcal{S}%
=\mathcal{Z}_{+}^{d}$.

Let $V\left(  y\right)  =y^{T}v$ be the total population in the stations
corresponding to the binary vector $v$. We are interested in the overflow
probability in any given subset of the Jackson network. More precisely, we
wish to estimate%
\begin{align*}
p_{n}^{V}  &  ={\mathbb{P}}\left\{  \text{ total population in stations
encoded by $v$ reaches}\right. \\
&  n\text{ before returning to $0$, starting from $0$}\}.
\end{align*}
In turn, $p_{n}^{V}$ can be expressed in terms of the following stopping
times,
\begin{align*}
&  T_{\left\{  0\right\}  }\triangleq\inf\{k\geq1:Q\left(  k\right)  =0\},\\
&  T_{n}^{V}\triangleq\inf\{k\geq1:V\left(  Q\left(  k\right)  \right)  \geq
n\}.
\end{align*}
Indeed, if we use the notation ${\mathbb{P}}_{x}(\cdot)\triangleq{\mathbb{P}%
}(\cdot|Q(0)=x)$ then we can rewrite $p_{n}^{V}$ as%
\begin{equation}
p_{n}^{V}={\mathbb{P}}_{0}(T_{n}^{V}\leq T_{\left\{  0\right\}  }).
\label{eqn:prob}%
\end{equation}
Similarly,
\begin{equation}
p_{n}^{V}\left(  x\right)  ={\mathbb{P}}_{x}(T_{n}^{V}\leq T_{\left\{
0\right\}  }). \label{p_x}%
\end{equation}

The asymptotic analysis of $p_{n}^{V}\left(  x\right)  $ can be studied by
means of large deviations theory. We shall indicate how this theory can be
applied to specify an efficient splitting algorithm in the next section. In
the mean time, let us provide a representation for the dynamics of the queue
length process that will be convenient in order to motivate the elements of
the efficient splitting algorithm that we shall analyze.

As we mentioned earlier, Jackson networks are basically constrained random
walks. The constraints arise because the number of customers in each station
must be non-negative. Thinking about Jackson networks as constrained random
walks facilitates the introduction and motivation of the necessary large
deviations elements behind the description of the splitting algorithm. In
order to specify the dynamics of the embedded discrete time Markov chain in
terms of a random walk type representation we need to introduce notation which
will be useful to specify the transitions at the boundaries induced by the
non-negativity constraints.

The state-space $\mathcal{Z}_{+}^{d}$ can be partitioned in $2^{d}$ different
regions which are indexed by all the subsets $E\subseteq\{1,\ldots,d\}$. The
region encoded by a given subset $E$ is defined as
\[
\partial_{E}=\{z\in{\mathbb{Z}}_{+}^{d}:z_{i}=0,i\in E,z_{i}>0,i\notin E\}.
\]
The interior of the domain is given by ${\partial_{\emptyset}}$ and the origin
is represented by $\partial_{\{1,2,...,d\}}$. Subsets other than the empty set
represent the \textquotedblleft boundaries\textquotedblright\ of the
state-space and correspond to system configurations in which at least one
station is empty. The collection of all possible values that the increments of
the process $Q$ can take depends on the current region at which $Q$ is
positioned. However, in any case, such collection is a subset of%
\[
{\mathbb{V}}\triangleq\{e_{i},-e_{i}+e_{j},-e_{j}:i,j=1,2,...,d\},
\]
where $e_{i}$ is the vector whose $i$-th component is one and the rest are
zero. An element of the form $e_{i}$ represents an arrival at station $i$, an
element of the form $-e_{i}+e_{j}$ represents a departure from station $i$
that flows to station $j$ and an element of the form $-e_{j}$ represents a
departure from station $i$ out of the system. The set of all possible
departures from station $i$ is a subset of%
\[
{\mathbb{V}}_{i}^{-}\triangleq\{w:w=-e_{i}\text{ or }w=-e_{i}+e_{j}\text{ for
some }j=1,...,d\}.
\]

\indent Because of the nonnegativity constraints on the boundaries of the
system we have to be careful when specifying the transition dynamics. First we
define a sequence of i.i.d. random variables $\{Y\left(  k\right)  :k\geq1\}$
so that for each $w\in{\mathbb{V}}$%
\[
\mathbb{P}\left(  Y\left(  k\right)  =w\right)  =\left\{
\begin{array}
[c]{ccccc}%
{\lambda}_{i} &  & \quad\text{if }w=e_{i}, &  & \\
\mu_{i}P_{ij} &  & \quad\text{if }w=-e_{i}+e_{j}, &  & \\
\mu_{i}P_{i0} &  & \quad\text{if }w=-e_{i}. &  &
\end{array}
\right.  .
\]
The dynamics of the queue-length process admit the random walk type
representation given by
\begin{equation}
Q(k+1)=Q(k)+\zeta\left(  Q(k),Y\left(  k+1\right)  \right)  ,
\label{eqn:dynamics}%
\end{equation}
where $\zeta\left(  \cdot\right)  $ is the constrained mapping and it is
defined for $x\in\partial_{E}$ via%
\[
\zeta\left(  x,w\right)  \triangleq\left\{
\begin{array}
[c]{ccccc}%
0 &  & \quad\text{if }w\in\cup_{i\in E}{\mathbb{V}}_{i}^{-} &  & \\
w &  & \quad\text{otherwise} &  &
\end{array}
\right.  .
\]

The large deviations theory associated to Jackson networks is somewhat similar
(at least in form) to that of random walks. One has to recognize, of course,
that the non-smoothness of the constrained mapping as a function of the state
of the system creates substantial technical complications, but we will leave
aside this issue in our discussion because our objective is simply to describe
the form of the necessary large deviations results for our purposes. An
extremely important role behind the development of large deviations theory for
light-tailed random walks is played by the logmoment generating function of
the increment distribution. So, given the similarities suggested by the
dynamics of (\ref{eqn:dynamics}) and those of a simple random walk it is not
surprising that the logmoment generating function of the increments, namely,%
\begin{equation}
\label{eqn:psi}\psi\left(  x,\theta\right)  \triangleq\log{\mathbb{E}}\left[
\exp\left(  \theta^{T}\zeta\left(  x,Y\left(  k\right)  \right)  \right)
\right]
\end{equation}
also plays a crucial role in the large deviations behavior of $p_{n}%
^{V}\left(  x\right)  $ as $n\nearrow\infty$.

In order to understand the large deviations behavior of $p_{n}^{V}$ it is
useful to scale space by $1/n$, thereby introducing a scaled queue length
process $\{Q_{n}\left(  k\right)  :k\geq0\}$ which evolves according to%
\[
Q_{n}(k+1)=Q_{n}(k)+\frac{1}{n}\zeta\left(  Q_{n}(k),Y\left(  k+1\right)
\right)  .
\]
Suppose that $Q_{n}\left(  0\right)  =y=x/n$ and note that
\[
T_{\left\{  0\right\}  }\triangleq\inf\{k\geq1:Q_{n}\left(  k\right)
=0\},T_{n}^{V}\triangleq\inf\{k\geq1:V\left(  Q_{n}\left(  k\right)  \right)
\geq1\}.
\]
Note that using the scaled queue length process one can write%
\begin{equation}
p_{n}^{V}\left(  y\right)  ={\mathbb{E}}\left[  p_{n}^{V}(y+\frac{1}{n}%
\zeta\left(  y,Y\left(  1\right)  \right)  )\right]  . \label{eqn:firststep}%
\end{equation}
Large deviations theory dictates that
\begin{equation}
p_{n}^{V}\left(  y\right)  =\exp\left(  -nW_{V}\left(  y\right)  +o\left(
n\right)  \right)  \label{eqn:largedevapp}%
\end{equation}
as $n\nearrow\infty$ for some non-negative function $W_{V}\left(
\cdot\right)  $. In order to characterize $W_{V}\left(  \cdot\right)  $ we can
combine the previous expression together with (\ref{eqn:firststep}) and a
formal Taylor expansion to obtain%
\begin{align*}
1  &  =\frac{1}{p_{n}^{V}\left(  y\right)  }{\mathbb{E}}\left[  p_{n}%
^{V}(y+\frac{1}{n}\zeta\left(  y,Y\left(  1\right)  \right)  )\right] \\
&  \approx\mathbb{E}\exp\{-nW_{V}[y+\frac{1}{n}\zeta\left(  y,Y\left(
1\right)  \right)  ]+nW_{V}\left(  y\right)  \}\\
&  =\mathbb{E}\exp\{-\partial W_{V}(y)\zeta\left(  y,Y\left(  1\right)
\right)  +o\left(  1\right)  \}\\
&  =\exp\left(  \psi\left(  y,-\partial W_{V}\left(  y\right)  \right)
+o\left(  1\right)  \right)  .
\end{align*}
Sending $n\nearrow\infty$ we formally arrive at the equation
\begin{equation}
\psi\left(  y,-\partial W_{V}\left(  y\right)  \right)  =0 \label{eq:Isaacs}%
\end{equation}
together with the boundary condition $W_{V}\left(  y\right)  =0$ if $V\left(
y\right)  \geq1$. The previous equation is the so-called Isaacs equation which
characterizes the large deviations behavior of $p_{n}^{V}\left(  \cdot\right)
$ and it was introduced together with a game theoretic interpretation by
Dupuis and Wang in \cite{DUPWAN04}. The solution to (\ref{eq:Isaacs}) is
understood in a weak sense because the function $W_{V}\left(  \cdot\right)  $
is typically not differentiable everywhere. Nevertheles, it coincides with a
certain calculus of variations representation which can be obtained out of the
local large deviations rate function for Jackson networks (see
\cite{MajRamanan08}).

An asymptotic lower bound for $W_{V}\left(  y\right)  $ can be obtained by
finding an appropriate subsolution to the Isaacs equation, in which the
equality signs in (\ref{eq:Isaacs}) are appropriately replaced by inequalities
thereby obtaining a so-called subsolution to the Isaacs equation. In
particular, $\overline{W}_{V}\left(  \cdot\right)  $ is said to be a
subsolution to the Isaacs equation if
\begin{equation}
\psi(y,-\partial\overline{W}_{V}\left(  y\right)  )\leq0 \label{eqn:subIsaacs}%
\end{equation}
subject to $\overline{W}_{V}\left(  y\right)  \leq0$ if $V\left(  y\right)
\geq1$. The subsolution property guarantees $\overline{W}_{V}\left(  y\right)
\leq W_{V}\left(  y\right)  $, which translates to an asymptotic logarithmic
upper bound $p_{n}^{V}\left(  y\right)  $. The subsolution is said to be
maximal at zero if $\overline{W}_{V}\left(  0\right)  =W_{V}\left(  0\right)
$. Not surprisingly, subsolutions are easier to construct than solutions and,
as we shall discuss in the next section, beyond their use in the development
of asymptotic upper bounds they can be applied to the design of efficient
simulation procedures. The use of subsolutions to the Isaacs equation for the
design of efficient simulation algorithms was introduced in \cite{DUPWAN04}. A
derivation of the subsolution equation (\ref{eqn:subIsaacs}) following the
same spirit leading to (\ref{eq:Isaacs}) using Lyapunov inequalities is given
in \cite{BlanLedGlyn09}.

As we mentioned in Section 2, the efficiency analysis of a rare-event
simulation estimator depends on the growth rate of its coefficient of
variation. We are interested in an asymptotic analysis that goes beyond the
error term $\exp(o\left(  n\right)  )$ given by the large deviations
approximation (\ref{eqn:largedevapp}). So, we must enhance the large
deviations approximations in order to provide a more precise estimate for
$p_{n}^{V}$. Developing such an estimate is the aim of the following
proposition which follows as a direct consequence of Proposition 2 and the
analysis in Section 5 in \cite{Blanchet09}; see also Section 4 in this paper
for a sketch of the proof.

\begin{proposition}
There exists $K>0$ (independent of $x$ and $n$) such that%
\[
p_{n}^{V}\left(  x\right)  \leq KP\{V\left(  Q\left(  \infty\right)  \right)
=n\}/P\{Q\left(  \infty\right)  =x\},
\]
where $Q_{\infty}$ is the steady state queue length. Moreover, if $\left\Vert
x\right\Vert \leq c$ for some $c\in(0,\infty)$ then$\footnote{Given two
non-negative functions $f\left(  \cdot\right)  $ and $g\left(  \cdot\right)
$, we say $f\left(  n\right)  =\Omega\left(  g\left(  n\right)  \right)  $ if
there exists $c,n_{0}\in(0,\infty)$ such that $f\left(  n\right)  \geq
cg\left(  n\right)  $ all $n\geq n_{0}$.}$%
\[
p_{n}^{V}\left(  x\right)  =\Omega\lbrack P\{V\left(  Q\left(  \infty\right)
\right)  =n\}/P\{Q\left(  \infty\right)  =x\}]
\]
as $n\nearrow\infty$.
\end{proposition}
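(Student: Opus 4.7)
My plan is to derive an exact representation for $p_n^V(x)$ via a time-reversal identity, exploiting the product-form stationary distribution $\pi(x)=\prod_{i=1}^{d}(1-\rho_i)\rho_i^{x_i}$ of the stable Jackson network. Write $A_n=\{y\in\mathbb{Z}_+^d:V(y)=n\}$ and let $\hat K(x,y)=\pi(y)K(y,x)/\pi(x)$ be the kernel of the time-reversed embedded chain $\hat Q$, whose law starting at $y$ I denote by $\hat{\mathbb P}_y$. For each fixed $y\in A_n$, pairing a forward-time trajectory $x=x_0\to x_1\to\cdots\to x_m=y$ on which $T_n^V=m$, $Q(T_n^V)=y$, and $T_n^V<T_{\{0\}}$, with its time-reversed trajectory $y=x_m\to\cdots\to x_0=x$, and telescoping the $\pi$-ratios along the path, yields the identity
$$\pi(x)\,\mathbb{P}_x\bigl(Q(T_n^V)=y,\;T_n^V<T_{\{0\}}\bigr)=\pi(y)\,\hat{\mathbb{P}}_y\bigl(T_x<T_{\{0\}}\wedge\tau_n^+\bigr),$$
where $\tau_n^+=\inf\{k\geq 1:Q(k)\in A_n\}$. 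Summing over $y\in A_n$ produces the exact representation
$$p_n^V(x)=\frac{1}{\pi(x)}\sum_{y\in A_n}\pi(y)\,\hat{\mathbb{P}}_y\bigl(T_x<T_{\{0\}}\wedge\tau_n^+\bigr).$$

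The upper bound is then immediate: bounding each factor $\hat{\mathbb{P}}_y(\cdot)\leq 1$ gives $p_n^V(x)\leq\mathbb{P}\{V(Q(\infty))=n\}/\pi(x)$, so one may take $K=1$.

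For the lower bound under $\|x\|\leq c$ it suffices to show $\hat{\mathbb{P}}_y(T_x<T_{\{0\}}\wedge\tau_n^+)\geq c'>0$ uniformly over $y\in A_n$ and large $n$, with $c'$ depending on $c$ only. I would produce this in two Foster--Lyapunov steps. First, by Burke-type quasi-reversibility, $\hat Q$ is itself a stable Jackson network (with reversed routing and suitably adjusted rates), hence has a negative drift of $V$ off the origin and a uniformly positive per-step probability of a downward departure move from any non-empty station; a standard Lyapunov argument then shows that from any $y\in A_n$ the reversed chain enters the bounded ball $\{\|z\|\leq 2c\}$ before $\tau_n^+$ with probability at least $c_1>0$, uniformly in $y$ and $n$. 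Second, from any state in this fixed ball, the irreducibility of $\hat Q$ and the fact that $A_n$ is far away for large $n$ give an explicit bounded-length path to the prescribed $x$ avoiding $\{0\}\cup A_n$, with probability at least $c_2(c)>0$. Setting $c'=c_1c_2$ and substituting into the exact representation yields the $\Omega$ bound.

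The main obstacle is the uniformity in the first Lyapunov step, i.e.\ controlling $\hat{\mathbb{P}}_y(\tau_{\{\|z\|\leq 2c\}}<\tau_n^+)$ uniformly over all $y\in A_n$, including the atypical boundary configurations such as $y=ne_i$ where the constrained mapping $\zeta$ switches across the faces $\partial_E$. This forces one to choose a Lyapunov function compatible with the non-smooth boundary behavior of $\zeta$, and is the substance of the analysis in Section~5 of \cite{Blanchet09} (in particular Proposition~2 there), which I would invoke directly. The upper bound, in contrast, is a purely algebraic consequence of the time-reversal identity.
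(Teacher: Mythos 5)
Your overall route---time reversal of the embedded chain, the product-form stationary distribution, and a uniform hitting estimate for the reversed network imported from \cite{Blanchet09}---is essentially the paper's. However, your central ``exact representation'' is false as an equality, and the error breaks precisely the step you call immediate. When you reverse a forward path $x=x_0\to\cdots\to x_m=y$ realizing $\{T_n^V=m,\,Q(T_n^V)=y,\,T_n^V<T_{\{0\}}\}$, the reversed path hits $x$ \emph{at} time $m$ but not necessarily for the \emph{first} time at $m$: any forward path that revisits $x$ before reaching $A_n$ reverses to a path with $T_x<m$. So your pairing actually identifies $\pi(y)\,\hat{\mathbb{P}}_y(T_x<T_{\{0\}}\wedge\tau_n^+)$ with $\pi(x)$ times the probability of the \emph{restricted} forward event $\{Q(T_n^V)=y,\;T_n^V<T_{\{0\}},\;T_{\{x\}}\geq T_n^V\}$, where $T_{\{x\}}$ is the return time to $x$ (equivalently, summing over \emph{all} forward paths puts the Green's function $\hat{\mathbb{E}}_y[\#\{k:\hat Q(k)=x,\ k<T_{\{0\}}\wedge\tau_n^+\}]$ on the right, not a first-passage probability). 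The correct relation is therefore
\[
p_n^V(x)\;=\;\frac{1}{\pi(x)\,\mathbb{P}_x\bigl(T_{\{x\}}\geq T_{C_0^n}\wedge T_{\{0\}}\bigr)}\sum_{y\in A_n}\pi(y)\,\hat{\mathbb{P}}_y\bigl(T_x<T_{\{0\}}\wedge\tau_n^+\bigr),
\]
so your displayed formula only gives $p_n^V(x)\geq\pi(x)^{-1}\sum_{y}\pi(y)\hat{\mathbb{P}}_y(\cdot)$---the wrong direction for the upper bound---and $K=1$ is not justified. A one-dimensional sanity check: for the embedded M/M/1 chain with $\lambda=1/3$, $\mu=2/3$, $x=1$, $n=3$, the left side of your identity is $1/7$ while the right side is $1/9$; the missing factor is $\mathbb{P}_1(T_{\{1\}}\geq T_3\wedge T_0)=7/9$.

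The gap is repairable, and the repair is exactly what the paper's Proposition in Section 5 supplies: first write $p_n^V(x)=\mathbb{P}_x(T_{C_0^n}<T_{\{0\}},\,T_{\{x\}}\geq T_{C_0^n}\wedge T_{\{0\}})/\mathbb{P}_x(T_{\{x\}}\geq T_{C_0^n}\wedge T_{\{0\}})$ by conditioning on whether the chain returns to $x$, apply the reversal only to the no-return numerator, and then invoke the uniform bound $\mathbb{P}_x(T_{\{x\}}\geq T_{C_0^n}\wedge T_{\{0\}})\geq\delta>0$ of (\ref{Eq:Bndrep}) (Proposition~7 of \cite{Blanchet09}); this yields the upper bound with $K$ proportional to $1/\delta$. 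Your lower-bound argument is unaffected by the sign of the error (the true $p_n^V(x)$ is only larger than your formula), and its substantive content---uniform positivity of $\hat{\mathbb{P}}_y(T_x<T_{\{0\}}\wedge\tau_n^+)$ over $y\in A_n$, or at least over a subset carrying a constant fraction of $\pi(A_n)$---is correctly identified and deferred to Section~5 of \cite{Blanchet09}, which is what the paper does as well.
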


\textbf{Remark }It is important to keep in mind that we shall mostly work with
the process $Q\left(  \cdot\right)  $ directly, as opposed to the scaled
version $Q_{n}\left(  \cdot\right)  $ which is used in the analysis of
\cite{DeanDup09}.

\bigskip

The previous proposition provides the necessary means to estimate $p_{n}^{V}$
up to a constant; we just need to recall that the distribution of $Q\left(
\infty\right)  $ is computable in closed form (see \cite{RobertQN}\ p. 95). In
particular, we have that
\begin{align*}
\pi\left(  m_{1},...,m_{d}\right)   &  =\prod_{j=1}^{d}{\mathbb{P}}\left(
Q_{j}\left(  \infty\right)  =m_{j}\right) \\
&  =\prod_{j=1}^{d}\left(  1-\rho_{j}\right)  \rho_{j}^{m_{j}},\quad
j=1,...,d,\text{ and }m_{j}\geq0.
\end{align*}
We shall use $\pi\left(  \cdot\right)  $ to denote the stationary measure of
$Q$. In simple words, the previous equation says that the steady state queue
length process has independent components which are geometrically distributed.
In particular, $P\left(  Q_{j}\left(  \infty\right)  =m\right)  =\rho_{j}%
^{m}(1-\rho_{j})$ for $m\geq0$. The next proposition follows directly from
standard properties of the geometric distribution (see Proposition 3 in
\cite{Blanchet09}).

\begin{proposition}
\label{prop: prob aym} $P[V\left(  Q\left(  \infty\right)  \right)
=n]=\Theta\left(  e^{-n\gamma_{V} }n^{\beta_{V}-1}\right)  \footnotetext[3]%
{Given two positive functions $f\left(  \cdot\right)  $ and $g\left(
\cdot\right)  $, recall that $f\left(  n\right)  =\Theta\left(  g\left(
n\right)  \right)  $ if $f\left(  n\right)  =O\left(  g\left(  n\right)
\right)  $ and $f\left(  n\right)  =\Omega\left(  g\left(  n\right)  \right)
$.}$, where $\gamma_V=-\log\rho^V_\ast$, in which $\rho^V_\ast=\max\{\rho_i:
v_i=1\}$; and $\beta_V=\sum_iI_\{\rho_i=\rho^V _\ast,v_i=1\}$ is the number of
bottleneck stations in the target subset corresponding to $v$.
\end{proposition}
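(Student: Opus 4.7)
My plan is to reduce the claim to a coefficient-extraction problem for a rational probability generating function. Since $Q(\infty)$ has independent geometric components with $\mathbb{P}(Q_i(\infty)=m)=(1-\rho_i)\rho_i^m$, the probability generating function of $V(Q(\infty))=\sum_{i:\,v_i=1}Q_i(\infty)$ factorises as
\[
\phi(z)=\mathbb{E}\bigl[z^{V(Q(\infty))}\bigr]=\prod_{i:\,v_i=1}\frac{1-\rho_i}{1-\rho_i z}.
\]
Separating the bottleneck stations in the target subset (those with $\rho_i=\rho^V_\ast$) from the remaining ones, I would write
\[
\phi(z)=g(z)\cdot\left(\frac{1-\rho^V_\ast}{1-\rho^V_\ast z}\right)^{\beta_V},\qquad g(z)=\prod_{i:\,v_i=1,\,\rho_i<\rho^V_\ast}\frac{1-\rho_i}{1-\rho_i z}.
\]
Since every pole of $g$ has modulus strictly greater than $1/\rho^V_\ast$, the function $g$ is analytic in an open disk properly containing $\{|z|\leq 1/\rho^V_\ast\}$, so in particular $g(1/\rho^V_\ast)\in(0,\infty)$.

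Second, I would expand both factors as power series and convolve. Using the negative-binomial identity $(1-\rho^V_\ast z)^{-\beta_V}=\sum_{n\geq 0}\binom{n+\beta_V-1}{\beta_V-1}(\rho^V_\ast)^n z^n$ together with $g(z)=\sum_{k\geq 0}g_kz^k$ (where $g_k\geq 0$ follows from the product form), I obtain
\[
\mathbb{P}(V(Q(\infty))=n)=(1-\rho^V_\ast)^{\beta_V}\sum_{k=0}^{n}g_k\binom{n-k+\beta_V-1}{\beta_V-1}(\rho^V_\ast)^{n-k}.
\]
For the upper bound I would use $\binom{n-k+\beta_V-1}{\beta_V-1}\leq C\,n^{\beta_V-1}$ and the fact that $\sum_k g_k(\rho^V_\ast)^{-k}=g(1/\rho^V_\ast)<\infty$ to conclude that the sum is $O(n^{\beta_V-1}(\rho^V_\ast)^n)$. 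For the lower bound, I would restrict the sum to $k\leq n/2$, apply $\binom{n-k+\beta_V-1}{\beta_V-1}\geq (n/2)^{\beta_V-1}/(\beta_V-1)!$, and drop all but finitely many of the $g_k$ terms to obtain $\Omega(n^{\beta_V-1}(\rho^V_\ast)^n)$. Combining both bounds and setting $\gamma_V=-\log\rho^V_\ast$ yields the desired $\Theta(n^{\beta_V-1}e^{-n\gamma_V})$.

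The central step is really just the separation of the order-$\beta_V$ pole at $z=1/\rho^V_\ast$ from the analytic remainder $g$; no genuine obstacle arises because all coefficients involved are non-negative, the non-bottleneck factors contribute poles of strictly smaller modulus, and standard negative-binomial coefficient asymptotics take care of the leading term. Informally, the statement is the discrete counterpart of the Laplace-type fact that convolving a Gamma density of shape $\beta_V$ and rate $\gamma_V$ with something of strictly faster exponential decay preserves both the polynomial prefactor $n^{\beta_V-1}$ and the exponential rate $e^{-n\gamma_V}$.
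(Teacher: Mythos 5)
Your proof is correct. Note that the paper does not actually prove this proposition: it states that the result ``follows directly from standard properties of the geometric distribution'' and defers to Proposition 3 of the cited reference \cite{Blanchet09}. Your generating-function argument is a correct, self-contained version of exactly that standard fact: you isolate the order-$\beta_V$ pole at $z=1/\rho_\ast^V$ (equivalently, the $NBin(\beta_V,1-\rho_\ast^V)$ contribution of the bottleneck stations) and show the remaining factor $g$ is analytic past that radius, so the convolution preserves both the rate $(\rho_\ast^V)^n$ and the prefactor $n^{\beta_V-1}$. The same decomposition appears later in the paper, in the Lemma inside the proof of Proposition 5, where the sum is written as $Z(\beta,1-\rho_\ast)+W$ with $W$ stochastically dominated by a negative binomial of strictly smaller parameter; your coefficient-extraction route and that stochastic-domination route are two renderings of the same idea, with yours having the minor advantage of yielding the two-sided bound in one explicit formula (the only detail worth making explicit in the lower bound is that $g_0=\prod_{i:\,v_i=1,\,\rho_i<\rho_\ast^V}(1-\rho_i)>0$, which you implicitly use when ``dropping all but finitely many'' terms).
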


\section{The Splitting Algorithm}

\label{splitting}

The previous section discussed some large deviations properties required to
guide the construction of an efficient splitting scheme using the theory
developed in the work of Dean and Dupuis \cite{DeanDup09}. In order to explain
the construction suggested by Dean and Dupuis let us first discuss the general
idea behind the splitting algorithm that we shall analyze; a variation of
which was first applied to Jackson networks by by Villen-Altamirano and
Villen-Altamirano \cite{VAVA91}.

The strategy is to divide the state-space into a collection of regions
$\{C_{j}^{n}:0\leq j\leq l_{n}\left(  x\right)  \}$ which are nested and that
help define \textquotedblleft milestone\textquotedblright\ events that
interpolate between the initial position of the process and the target set,
which corresponds to the region $C_{0}^{n}$. That is, in our setting we put
$C_{0}^{n}\triangleq\{y\in\mathcal{S}:V\left(  y\right)  \geq n\}$ and the
remaining $C_{j}^{n}$'s are placed so that $C_{0}^{n}\subseteq C_{1}%
^{n}\subseteq...\subseteq C_{M_{n}}^{n}$. How to construct the level sets
$C_{j}^{n}$ in order to induce efficiency will be discussed below. An
observation that is intuitive at this point, however, is that one should have
$M_{n}=\Theta\left(  n\right)  $ so that the next milestone event becomes
accessible given the current level. For the moment, let us assume that the
$C_{j}^{n}$'s have been placed. The splitting algorithm proceeds as follows.

\bigskip

\textbf{Algorithm SA}

1.- Initiate the simulation procedure with a single particle starting from
position $x\in C_{k}^{n}$ for a given $k\geq1$. Let $w_{1}=1$ be the initial
weight associated to such particle.

2.- Evolve the initial particle until either it hits $\{0\}$ or it hits level
$C_{k-1}^{n}$. If the particle hits $\{0\}$, then the particle is said to die.
If the particle reaches level $C_{k-1}^{n}$ then it is \textit{replaced} by
$r$ identical particles (for a given integer $r>1$). The replacing particles
are called the immediate descendants or children of the initial particle,
which in turn is said to be their parent. The children are positioned
precisely at the place where the parent particle reached level $C_{k-1}^{n}$.
The weight $w_{j}$ associated to the $j$-th children (enumerate the children
arbitrarily) has a value equal to the weight of the parent particle multipled
by $1/r$.

3.- The procedure starting from step 1 is replicated for each of the offspring
particles in place; carrying over the value of each of the weights at each
level for the surviving particles (the weights of the particles that die can
be disregarded).

4.- Steps 1 to 3 are repeated until all the particles either die or reach
level $C_{0}^{n}$.

\bigskip

Dean and Dupuis in \cite{DeanDup09} show how to apply large deviations theory
to select the $C_{j}^{n}$'s in order to obtain a weakly efficient splitting
algorithm. One needs to balance the number of the $C_{j}^{n}$'s so that it is
not unlikely for a given particle to reach the next level while keeping the
total number of particles controlled. We now provide a formal motivation for
the use of large deviations for constructing the $C_{j}^{n}$'s in a balanced way.

It is convenient, as we did in our formal large deviations discussion in the
previous section, to consider the scaled process $Q_{n}\left(  \cdot\right)
$. Let us assume that the splitting mechanism indicated in Algorithm SA is in
place and that our initial position is set at level $Q\left(  0\right)  =x$,
so that $Q_{n}\left(  0\right)  =y=x/n$. The $C_{j}^{n}$'s are typically
constructed in terms of the level sets of a so-called importance function
which we shall denote by $U\left(  \cdot\right)  $. In particular, put
$D_{n}\triangleq\{y\in n^{-1}\mathcal{S}:V\left(  y\right)  <1\}$ and set
$C_{j}^{n}=nL_{z_{n}\left(  j\right)  }$, where
\begin{equation}
L_{z}\triangleq\{y\in D_{n}:U\left(  y\right)  \leq z\}, \label{eqn:level fun}%
\end{equation}
and the $z_{n}\left(  j\right)  $'s are appropriately chosen momentarily.
Then, define
\begin{equation}
l_{n}\left(  y\right)  =\min\{j\geq0:ny\in C_{j}^{n}\}=\min\{j\geq0:x\in
C_{j}^{n}\}. \label{eq:def_ly}%
\end{equation}
The total weight corresponding to a particle that reaches level $C_{0}^{n}$
given that it started at level $l_{n}\left(  y\right)  $ is $r^{-l_{n}\left(
y\right)  }$. In order to have at least a weakly efficient algorithm we wish
to achieve two constraints. The first one imposes the total weight of a
particle reaching level $C_{0}^{n}$ to be $p_{n}^{V}\left(  x\right)
\exp\left(  o\left(  n\right)  \right)  $; this would guarantee that the
second moment of the resulting estimator achieves asymptotic optimality. The
second constraint dictates that the expected number of particles that make it
to $C_{0}^{n}$, which is roughly $r^{l_{n}\left(  y\right)  }p_{n}^{V}\left(
x\right)  $ exhibits subexponential growth (i.e. $\exp\left(  o\left(
n\right)  \right)  $); this would guarantee a cost per replication that is
subexponential. Note that both constraints lead to the requirement of
$r^{l_{n}\left(  y\right)  }p_{n}^{V}\left(  x\right)  =\exp\left(  o\left(
n\right)  \right)  $. So, given a subsolution $\overline{W}_{V}\left(
\cdot\right)  $ to the corresponding Isaacs equation, which implies that
\[
p_{n}^{V}\left(  x\right)  \leq\exp\left(  -n\bar{W}_{V}\left(  y\right)
+{o}\left(  n\right)  \right)  ,
\]
it suffices to ensure that%
\begin{equation}
l_{n}\left(  y\right)  \log\left(  r\right)  -n\overline{W}_{V}\left(
y\right)  =o\left(  n\right)  . \label{eq:heur_sub}%
\end{equation}
The behavior of $l_{n}\left(  y\right)  $ as a $n\nearrow\infty$ only relates
to the properties of the function $U\left(  \cdot\right)  $ and it is really
independent of the large deviations behavior of the system. In particular,
picking $z_{n}\left(  j\right)  =\Delta j/n,\Delta\in\left(  0,1\right]  $
yields $l_{n}\left(  y\right)  =\left\lceil nU\left(  y\right)  /\Delta
\right\rceil $ and therefore, equation (\ref{eq:heur_sub}) suggests that one
should select $U\left(  y\right)  =\Delta\overline{W}_{V}\left(  y\right)
/\log\left(  r\right)  $ with $\overline{W}_{V}\left(  0\right)  =W_{V}\left(
0\right)  $ in order to obtain a weakly efficient estimator for $p_{n}^{V}$.
This is precisely the conclusion obtained in the work of \cite{DeanDup09} who
present a rigorous analysis that justifies the previous heuristic discussion.
Our development in the next section will sharpen the efficiency properties of
the sampler proposed in \cite{DeanDup09} when applied to Jackson networks. So,
we content ourselves with the previous heuristic motivation for the splitting
method that we will analyze in the next section and which in turn is based on
the viscosity subsolution given by
\begin{equation}
\bar{W}_{V}\left(  y\right)  =\varrho^{T}y-\log\rho_{\ast}^{V},
\label{eqn:usefulsub}%
\end{equation}
where $\varrho_{i}=-\log\rho_{i}$ for $i=1,...,d$, see e.g., \cite{DupWang08}
and \cite{DeanDup09}.

We close this section with a precise definition of the estimator that we will
analyze. First, given a constant $\Delta\in\left(  0,1\right]  $ (the level
size) define $\bar{W}_{V}\left(  \cdot\right)  $ as indicated in
(\ref{eqn:usefulsub}) for each $y=x/n$ with $x\in\mathcal{S}$. Then, select an
integer $r>1$ and define $U\left(  y\right)  =\Delta\overline{W}_{V}\left(
y\right)  /\log\left(  r\right)  $. Given the initial position $x$ put $y=x/n$
and define the sets $\{C_{j}^{n}:1\leq j\leq l_{n}\left(  y\right)  \}$ as
indicated above (see equation (\ref{eq:def_ly})). Run Algorithm SA and let
$N_{n}$ be the number of particles that survive up to $C_{0}^{n}$; their
corresponding final weight is $1/r^{l_{n}\left(  x\right)  }$. Our estimator
for $p_{n}^{V}\left(  x\right)  $ is simply
\begin{equation}
R_{n}\left(  x\right)  =N_{n}\left(  x\right)  /r^{l_{n}\left(  x\right)  }
\label{eqn:SA estimator}%
\end{equation}

Now, for the sake of analytical convenience, when analyzing the second moment
of $R_{n}\left(  x\right)  $ we will adopt the so-called \emph{fully
branching} representation of the previous estimator (see \cite{DeanDup09}).
Such fully branching representation is obtained by splitting death particles
at level zero. It is useful to think about fully branching conceptually in the
following terms. Start with a particle at position $x$ in level $C_{l_{n}%
\left(  x\right)  }^{n}$ and run step 2 of Algorithm SA, but instead of
killing the particle if it hits $\{0\}$ before reaching $C_{l_{n}\left(
x\right)  -1}^{n}$, just allow it to also split into $r$ particles and update
the weight of the children as indicated in Algorithm SA when the particle
reaches $C_{l_{n}\left(  x\right)  -1}^{n}$. Step 3 continues, now the
particles that are sitting in level $C_{l_{n}\left(  x\right)  -1} ^{n}$ will
evolve and the death particles will once again split and remain in state $0$
(so state $0$ is being populated with death particles). After $l_{n}\left(
x\right)  $ iterations we have $r^{l_{n}\left(  x\right)  }$ total particles
labeled $1,2,...,r^{l_{n}\left(  x\right)  }$, each with weight $1/r^{l_{n}%
\left(  x\right)  }$. We define $I_{j}$ as the indicator function of the event
that the $j$-th particle is in $C_{0}^{n}$ so that $N_{n}\left(  x\right)
=\sum_{j=1}^{r^{l_{n}\left(  x\right)  }}I_{j}$. The fully branching
representation of $R_{n}\left(  x\right)  $ is simply%
\begin{equation}
R_{n}\left(  x\right)  ={r^{-l_{n}\left(  x\right)  }}\sum_{j=1}%
^{r^{l_{n}\left(  x\right)  }}I_{j}. \label{eqn:SFB estimator}%
\end{equation}

\section{Analysis of Splitting Estimators}

\label{analysis} We are now in a good position to perform a refined efficiency
analysis for the estimator $R_{n}\left(  x\right)  $. We shall break our
analysis into two parts. The first part corresponds to the expected number of
particles generated per run and the second part deals with the second moment
of $R_{n}\left(  x\right)  $. First, we will review a technique studied in
\cite{Blanchet09} based on the corresponding time reversed chain associated to
the queue length process Q. For both quantities we are able to obtain an upper
bound. We are then able to reach the conclusion that this multilevel Splitting
algorithm substantially outperforms the direct polynomial time algorithm for
solving the associated system of linear equations.

Our analysis takes advantage of the time reversed process associated to the
underlying Jackson network which we shall now define. Given the transition
matrix $\{K\left(  x,y\right)  :x,y\in\mathcal{S}\}$ of the process $Q$, we
define the reversed Markov chain $\tilde{Q}=\{\tilde{Q}\left(  k\right)
:k\geq0\}$ via the transition matrix $\tilde{K}\left(  \cdot\right)  $:
\[
\tilde{K}\left(  y,x\right)  =K\left(  x,y\right)  \pi\left(  x\right)
/\pi\left(  y\right)  ,
\]
for $x,y\in\mathcal{S}$. It turns out that $\tilde{Q}$ also describes the
queue length process of an open stable Jackson network with stationary
distribution equal to $\pi\left(  \cdot\right)  $, (see \cite{RobertQN}\ p.
95). We will use $\tilde{P}_{x}\left(  \cdot\right)  $ to denote the
probability measure in path space associated to $\tilde{Q}$ given that
$\tilde{Q}\left(  0\right)  =x$.\newline\indent The following result is
similar to that of Proposition 2 in \cite{Blanchet09}. However, our
representation in (\ref{eqn:rep_overflow}) is slightly more useful for our purposes.

\begin{proposition}%
\begin{align}
p_{n}^{V}\left(  x\right)   &  =\frac{\tilde{{\mathbb{P}}}_{\pi}\left(
\tilde{Q}\left(  0\right)  \in C_{0}^{n},\tilde{T}_{\{x\}}\leq\tilde
{T}_{\left\{  0\right\}  },\tilde{T}_{\{x\}}<\tilde{T}_{C_{0}^{n}}\right)
}{\pi(x)P_{x}(T_{\{x\}}\geq T_{C_{0}^{n}}\wedge T_{\{0\}})}%
\label{eqn:rep_overflow}\\
&  =\frac{\tilde{{\mathbb{P}}}_{\pi}\left(  \tilde{Q}\left(  0\right)  \in
C_{0}^{n},\tilde{T}_{\{x\}}\leq\tilde{T}_{\left\{  0\right\}  }<\tilde
{T}_{C_{0}^{n}}\right)  }{\pi(x)P_{0}(T_{\{x\}}\leq T_{C_{0}^{n}}\wedge
T_{\{0\}})} \label{eqn:rep_overflow2}%
\end{align}
where $\tilde{T}_{C_{0}^{n}}\triangleq\inf\{k\geq1:\tilde{Q}\left(  k\right)
\in C_{0}^{n}\}$, $\tilde{T}_{\{x\}}\triangleq\inf\{k\geq0:\tilde{Q}\left(
k\right)  =x\}$, $T_{C_{0}^{n}}\triangleq\inf\{k\geq1:Q\left(  k\right)  \in
C_{0}^{n}\}$ and $T_{\{x\}}=\inf\{k\geq0:Q\left(  k\right)  =x\}$. Moreover,
there exists $\delta>0$ (independent of $x$ and $n$) such that%
\begin{equation}
P_{x}(T_{\{x\}}\geq T_{C_{0}^{n}}\wedge T_{\{0\}})\geq P_{x}(T_{\left\vert
\left\vert x\right\vert \right\vert }\geq T_{\{0\}})\geq\delta,
\label{Eq:Bndrep}%
\end{equation}
where $\left\vert \left\vert x\right\vert \right\vert $ is the $L_{1}$ norm of
$x$.
\end{proposition}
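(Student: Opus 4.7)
The overall approach is to combine the path-wise time-reversal identity
\[
\pi(y_{0})\prod_{i=0}^{k-1}K(y_{i},y_{i+1}) \;=\; \pi(y_{k})\prod_{i=0}^{k-1}\tilde{K}(y_{i+1},y_{i}),
\]
obtained by iterating the one-step relation $\pi(y)K(y,z)=\pi(z)\tilde{K}(z,y)$, with the strong Markov property applied at carefully chosen excursion times. Summing the path identity over a prescribed family of paths converts a forward first-passage probability into a reverse first-passage probability weighted by $\pi$ at the endpoint, and this is the only probabilistic input needed for both representations.

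For (\ref{eqn:rep_overflow}) I would first run a restart argument at $x$: decomposing each trajectory from $x$ according to whether the first sub-excursion reaches $C_{0}^{n}$, reaches $\{0\}$, or returns to $x$, the strong Markov property yields
\[
p_{n}^{V}(x)\;=\;\frac{P_{x}\bigl(T_{C_{0}^{n}}\le T_{\{0\}}\wedge T_{\{x\}}\bigr)}{P_{x}\bigl(T_{\{x\}}\ge T_{C_{0}^{n}}\wedge T_{\{0\}}\bigr)},
\]
where in the $P_{x}$-expressions $T_{\{x\}}$ is read as the first return time. Applying the path identity to each excursion $x=y_{0}\to\cdots\to y_{k}=z\in C_{0}^{n}$ with $y_{1},\ldots,y_{k-1}\notin\{0,x\}\cup C_{0}^{n}$ and summing over $z\in C_{0}^{n}$ rewrites the numerator as $\tilde{\mathbb{P}}_{\pi}(\tilde{Q}(0)\in C_{0}^{n},\,\tilde{T}_{\{x\}}\le\tilde{T}_{\{0\}},\,\tilde{T}_{\{x\}}<\tilde{T}_{C_{0}^{n}})$, which is exactly the numerator of (\ref{eqn:rep_overflow}).

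For (\ref{eqn:rep_overflow2}) I would split the numerator with the strong Markov property of $\tilde{Q}$ at $\tilde{T}_{\{x\}}$:
\[
\tilde{\mathbb{P}}_{\pi}\bigl(\tilde{Q}(0)\in C_{0}^{n},\,\tilde{T}_{\{x\}}\le\tilde{T}_{\{0\}}<\tilde{T}_{C_{0}^{n}}\bigr) = \tilde{\mathbb{P}}_{\pi}\bigl(\tilde{Q}(0)\in C_{0}^{n},\,\tilde{T}_{\{x\}}\le\tilde{T}_{\{0\}},\,\tilde{T}_{\{x\}}<\tilde{T}_{C_{0}^{n}}\bigr)\,\tilde{\mathbb{P}}_{x}\bigl(\tilde{T}_{\{0\}}<\tilde{T}_{C_{0}^{n}}\bigr),
\]
substitute (\ref{eqn:rep_overflow}) for the first factor, and convert $\tilde{\mathbb{P}}_{x}(\tilde{T}_{\{0\}}<\tilde{T}_{C_{0}^{n}})$ into a forward probability by applying the path identity a second time, together with the standard cycle identity $\pi(x)P_{x}(T_{\{0\}}<T_{\{x\}}^{+})=\pi(0)P_{0}(T_{\{x\}}<T_{\{0\}}^{+})$ (itself a one-line consequence of the path identity) to adjust the $\pi$-weights. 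Routine algebra then produces the ratio appearing in (\ref{eqn:rep_overflow2}).

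Finally, for (\ref{Eq:Bndrep}), the first inequality is a straightforward event-inclusion: on $\{T_{\|x\|}\ge T_{\{0\}}\}$ one has $\|Q(k)\|<\|x\|$ for every $1\le k<T_{\{0\}}$, which forces $Q(k)\ne x$ (different $L^{1}$ norm) and $V(Q(k))\le\|Q(k)\|<\|x\|\le n$, so $Q(k)\notin C_{0}^{n}$; hence $T_{\{x\}}\ge T_{\{0\}}\ge T_{C_{0}^{n}}\wedge T_{\{0\}}$. The uniform lower bound $P_{x}(T_{\|x\|}\ge T_{\{0\}})\ge\delta>0$ is the genuinely substantive ingredient and is the main obstacle, since it claims a probability bounded away from zero, uniformly in $x$ and $n$, that the whole population descends monotonically to $0$ without ever recovering to its initial level. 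I would establish it by exploiting stability of the Jackson network through a Foster--Lyapunov drift inequality, combined with Kac's identity $E_{0}[T_{\{0\}}^{+}]=1/\pi(0)$ to control the number of visits to level $\|x\|$ within a typical excursion from $0$; this is essentially the line of argument developed in \cite{Blanchet09}, whose estimates may be invoked directly.
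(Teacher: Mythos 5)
Your derivation of (\ref{eqn:rep_overflow}) is correct and is essentially the paper's own argument: the restart decomposition at $x$ gives
\[
p_{n}^{V}(x)=\frac{P_{x}\bigl(T_{C_{0}^{n}}<T_{\{0\}},\,T_{\{x\}}\geq T_{C_{0}^{n}}\wedge T_{\{0\}}\bigr)}{P_{x}\bigl(T_{\{x\}}\geq T_{C_{0}^{n}}\wedge T_{\{0\}}\bigr)},
\]
and the iterated reversal identity applied excursion by excursion converts the numerator into $\tilde{{\mathbb{P}}}_{\pi}(\cdot)$, exactly as in the paper. For (\ref{eqn:rep_overflow2}) and for the lower bound $\delta>0$ in (\ref{Eq:Bndrep}) the paper offers no proof at all (it cites Propositions 2 and 7 of \cite{Blanchet09}), so your sketches go beyond the written text; your event-inclusion argument for the first inequality in (\ref{Eq:Bndrep}) is right, and deferring the uniform bound to the drift estimates of \cite{Blanchet09} is precisely what the authors do.

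The one place where you should not trust yourself is the ``routine algebra'' closing your derivation of (\ref{eqn:rep_overflow2}): carried out honestly, it does \emph{not} land on the displayed formula. Reversing the paths in the event $\{\tilde{Q}(0)\in C_{0}^{n},\ \tilde{T}_{\{x\}}\leq\tilde{T}_{\{0\}}<\tilde{T}_{C_{0}^{n}}\}$ produces forward paths that \emph{start} at $0$ (the reversed paths end there), so the stationary weight that emerges is $\pi(0)$, not $\pi(x)$: the numerator equals $\pi(0)P_{0}(T_{\{x\}}\leq T_{C_{0}^{n}}<T_{\{0\}})$, and the forward strong Markov property at $T_{\{x\}}$ then yields $p_{n}^{V}(x)$ after dividing by $\pi(0)P_{0}(T_{\{x\}}\leq T_{C_{0}^{n}}\wedge T_{\{0\}})$ --- this direct reversal of the numerator is also a much shorter route than factoring through $\tilde{{\mathbb{P}}}_{x}(\tilde{T}_{\{0\}}<\tilde{T}_{C_{0}^{n}})$ plus a cycle identity. (A check on the $M/M/1$ embedded chain with $x=1$, $n=2$ shows the printed ratio with $\pi(x)$ equals $\mu$ while $p_{2}^{V}(1)=\lambda$; the discrepancy is exactly the factor $\pi(0)/\pi(x)$.) In your own scheme this factor enters through the very cycle identity you invoke, together with the fact that the escape probabilities $P_{x}(T_{\{x\}}^{+}\geq T_{C_{0}^{n}}\wedge T_{\{0\}})$ and $\tilde{{\mathbb{P}}}_{x}(\tilde{T}_{\{x\}}^{+}\geq\tilde{T}_{C_{0}^{n}}\wedge\tilde{T}_{\{0\}})$ coincide; if you force the bookkeeping to reproduce the stated $\pi(x)$ you will have made an error rather than fixed one.
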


\begin{proof}
We assume that $x\neq0$. The case $x=0$ is included in the analysis
of (\ref{eqn:rep_overflow2}).
First, we observe that%
\begin{align*}
p_{n}^{V}\left(  x\right)    & =P_{x}\left(  T_{C_{0}^{n}}<T_{\{0\}}%
,T_{\{x\}}<T_{C_{0}^{n}}\wedge T_{\{0\}}\right)  +P_{x}\left(  T_{C_{0}^{n}%
}<T_{\{0\}},T_{\{x\}}\geq T_{C_{0}^{n}}\wedge T_{\{0\}}\right)  \\
& =p_{n}^{V}\left(  x\right)  P_{x}\left(  T_{\{x\}}<T_{C_{0}^{n}}\wedge
T_{\{0\}}\right)  +P_{x}\left(  T_{C_{0}^{n}}<T_{\{0\}},T_{\{x\}}\geq
T_{C_{0}^{n}}\wedge T_{\{0\}}\right)  .
\end{align*}
Therefore,
\[
p_{n}^{V}\left(  x\right)  =\frac{P_{x}\left(  T_{C_{0}^{n}}<T_{\{0\}}%
,T_{\{x\}}\geq T_{C_{0}^{n}}\wedge T_{\{0\}}\right)  }{P_{x}\left(
T_{\{x\}}\geq T_{C_{0}^{n}}\wedge T_{\{0\}}\right)  }.
\]
Following the same technique as in Proposition 2 in \cite{Blanchet09} we have
that%
\begin{align*}
& \pi\left(  x\right)  P_{x}\left(  T_{C_{0}^{n}}<T_{\{0\}},T_{\{x\}}\geq
T_{C_{0}^{n}}\wedge T_{\{0\}}\right)  \\
& =\sum_{k=0}^{\infty}\pi\left(  x\right)  P_{x}\left(  T_{C_{0}^{n}%
}<T_{\{0\}},T_{\{x\}}\geq T_{C_{0}^{n}}\wedge T_{\{0\}},T_{C_{0}^{n}%
}=k\right)  \\
& =\sum_{k=1}^{\infty}\pi\left(  x\right)  \sum_{y_{0}=x,y_{1},..,y_{k}%
}K\left(  y_{0},y_{1}\right)  \times...\times K\left(  y_{k-1},y_{k}\right)
I\left(  k<T_{\{0\}},T_{\{x\}}\geq
k\wedge T_{\{0\}},T_{C_{0}^{n}}=k\right)  \\
& =\sum_{k=1}^{\infty}\sum_{y_{0}=x,y_{1},..,y_{k}}K^{\prime}\left(
y_{1},y_{0}\right)  \times...\times K^{\prime}\left(  y_{k},y_{k-1}\right)
\pi\left(  y_{k}\right)  I\left(  k<T_{\{0\}},T_{\{x\}}\geq k\wedge T_{\{0\}},T_{C_{0}^{n}}=k\right)  .
\end{align*}
Letting $y_{i}^{\prime}=y_{k-i}$ for $i=1,...k$ we see that the summation in
each of the terms above ranges over paths $y_{0}^{\prime},...,y_{k}^{\prime}$
satisfying that $y_{0}^{\prime}\in C_{0}^{n}$, $\tilde{T}_{\{x\}}=k$ (so in
particular $y_{k}^{\prime}=x$) and also that $\tilde{T}_{\left\{  0\right\}
}\geq k, \tilde{T}_{C_{0}^{n}} > k$. So, we can interpret the previous sum as%
\[
\tilde{{\mathbb{P}}}_{\pi}\left(  \tilde{Q}\left(  0\right)  \in C_{0}%
^{n},\tilde{T}_{\{x\}}\leq\tilde{T}_{\left\{  0\right\}  }, \tilde{T}_{\{x\}} < \tilde
{T}_{C_{0}^{n}}\right)  .
\]
This yields part (\ref{eqn:rep_overflow}). Part (\ref{eqn:rep_overflow2})
corresponds to Proposition 2 of \cite{Blanchet09}. Finally, (\ref{Eq:Bndrep})
also follows as in Proposition 7 of \cite{Blanchet09}.
\end{proof}

\noindent Proposition 1 and 2 from Section 2 follow as a consequence of this
result, the rest of the details are given in Section 5 of \cite{Blanchet09}%
.\smallskip

Given the subsolution we proposed in Section $4$, the importance function can
be written as
\begin{align}
U\left(  x/n\right)  =\bar{W}_{V}\left(  x/n\right)  \frac{\Delta}{\log r}  &
=\left(  \frac{1}{n}\varrho^{T}x-\log\rho_{\ast}^{V}\right)  \frac{\Delta
}{\log r}\nonumber\label{eqn:importance function}\\
&  =C\left(  \Delta-\frac{1}{n}\alpha^{T}x\Delta\right)  ,
\end{align}
where $C=-\log\rho_{\ast}^{V}/\log r$, and $\alpha=\varrho\,/\log\rho_{\ast
}^{V}$. The level index function also simplifies to
\begin{equation}
l^{n}\left(  x\right)  =\lceil{\frac{nU\left(  x/n\right)  }{\Delta}}%
\rceil=\lceil nC\left(  1-\frac{1}{n}\alpha^{T}x\right)  \rceil=\lceil
{C\left(  n-\alpha^{T}x\right)  }\rceil. \label{eqn:level ind fun}%
\end{equation}
We shall first look at the expected number of surviving particles of the
splitting algorithm which characterizes the stability of the algorithm. One
shall keep in mind that when the complexity of the splitting algorithm is
concerned, what actually matters is the total function evaluation involved in
each run. An upper bound is obtained for this quantity, as measured by the sum
of all particles generated at interim levels weighted by the maximum remaining
function evaluations associated with each of them. We first have the following result.

\begin{proposition}
The expected terminal number of particles for the splitting algorithm
specified by $\left(  \Delta, U\right)  $ above satisfies
\begin{equation}
\label{eqn:num particles}{\mathbb{E}}\left[  N_{n}\left(  x\right)  \right]
=\Theta\left(  \thinspace n^{\beta_{V}-1}\right)
\end{equation}
where $\beta_{V}$, introduced in Proposition 2, denotes the number of
bottleneck stations corresponding to the vector $v$.
\end{proposition}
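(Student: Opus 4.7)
The plan is to combine the unbiasedness of the splitting estimator with the sharp asymptotics already established for $p_n^V(x)$ in Propositions 1 and 2. The key identity is that the fully branching representation (\ref{eqn:SFB estimator}) yields $\mathbb{E}[R_n(x)] = p_n^V(x)$, since each of the $r^{l_n(x)}$ labeled particles evolves with the marginal law of the Jackson network started at $x$, so $\mathbb{E}[I_j] = p_n^V(x)$ for every $j$. Combining this with $R_n(x) = N_n(x)/r^{l_n(x)}$ gives
\begin{equation*}
\mathbb{E}[N_n(x)] \;=\; r^{l_n(x)}\, p_n^V(x).
\end{equation*}
So the problem reduces to estimating the two factors on the right.

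For the first factor, I would substitute the explicit formula $l_n(x) = \lceil C(n-\alpha^T x) \rceil$ with $C = -\log \rho_\ast^V/\log r = \gamma_V/\log r$. Then $r^C = e^{\gamma_V}$, and the ceiling introduces at most a multiplicative factor of $r$, so
\begin{equation*}
r^{l_n(x)} \;=\; \Theta\!\bigl(e^{\gamma_V(n - \alpha^T x)}\bigr) \;=\; \Theta(e^{n\gamma_V}),
\end{equation*}
where the last equality uses that the initial state $x$ (and hence $\alpha^T x$) is fixed, so $e^{-\gamma_V \alpha^T x}$ is a positive constant that can be absorbed into the $\Theta$ constants.

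For the second factor, Proposition 1 (applied with $\|x\|$ bounded) gives $p_n^V(x) = \Theta\bigl(P\{V(Q(\infty))=n\}/P\{Q(\infty)=x\}\bigr)$. The denominator $P\{Q(\infty)=x\} = \prod_j (1-\rho_j)\rho_j^{x_j}$ is a strictly positive constant independent of $n$, and Proposition 2 identifies the numerator as $\Theta(e^{-n\gamma_V} n^{\beta_V - 1})$. Hence $p_n^V(x) = \Theta(e^{-n\gamma_V} n^{\beta_V - 1})$, and multiplying the two asymptotics yields the claim $\mathbb{E}[N_n(x)] = \Theta(n^{\beta_V - 1})$.

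The main obstacle in this plan is not really analytic but rather conceptual: one must cleanly justify that, under the fully branching reformulation, the estimator is truly unbiased so that the identity $\mathbb{E}[N_n(x)] = r^{l_n(x)} p_n^V(x)$ can be used without loss. Once that is in hand, the rest is bookkeeping that calibrates the subsolution-based importance function against the refined Jackson network asymptotics of Proposition 1; in particular, the fact that the subsolution $\bar W_V$ is maximal at $0$ (so the exponential growth of $r^{l_n(x)}$ exactly cancels the exponential decay of $p_n^V(x)$) is the structural reason the prefactor $n^{\beta_V-1}$ survives.
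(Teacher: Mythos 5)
Your proposal is correct and follows essentially the same route as the paper: both start from the identity ${\mathbb{E}}[N_n(x)] = r^{l_n(x)}\,p_n^V(x)$, substitute $l_n(x)=\lceil C(n-\alpha^T x)\rceil$ with $r^{C}=e^{\gamma_V}$, and invoke the asymptotics $p_n^V(x)=\Theta(e^{-n\gamma_V}n^{\beta_V-1})$ so that the exponential factors cancel. If anything, you are slightly more careful than the paper in justifying the unbiasedness identity via the fully branching representation and in routing the $\Theta$-asymptotics for $p_n^V(x)$ through both Propositions 1 and 2 rather than citing Proposition 2 alone.
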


\begin{proof}
Note that
\[
{\mathbb{E}}\left[ N_{n}\left( x\right) \right] =r^{l^{n}\left( x\right)
}\thinspace p_{n}^{V}\left( x\right) .
\]
In Proposition \ref{prop: prob aym} we know that $p_{n}^{V}\left( x\right)
=\Theta\left( e^{-\gamma_{V} n}n^{\beta_{V}-1}\right) $. Since $e^{-\gamma
_{V}}=e^{\log\rho^{V}_{*}}=e^{-C\log r}=r^{-C}$, we can write $p_{n}^{V}\left(
x\right) =\Theta\left( r^{-nC}n^{\beta_{V}-1}\right) $. Hence, plug in
$l^{n}\left( x\right) =\lceil C\left( n-x^{T}v\right) \rceil$, we have
\[
{\mathbb{E}}\left[ N_{n}\left( x\right) \right] =\Theta\left( r^{-nC}%
n^{\beta_{V}-1}r^{\lceil nC\rceil}\right) =\Theta\left(  \thinspace
n^{\beta_{V}-1}\right) .
\]
\end{proof}

As pointed out earlier, the number of terminal surviving particles, although
serves as a reasonable proxy to measure the stability of the algorithm, is not
suitable for quantifying the complexity. We also need to take into account the
number of function evaluations required to generate $R_{n}\left(  x\right)  $.
The next result addresses precisely this issue.

\begin{proposition}
\label{prop: effort per run} The expected computational effort per run
required to generate a single replication of $R_{n}\left(  x\right)  $ is
$O\left(  n^{\beta_{V}+1}\right)  $.
\end{proposition}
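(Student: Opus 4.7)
I would express the total expected work in one replication as a double sum over levels and particles,
\begin{equation*}
\mathbb{E}[W_n(x)] \;=\; O\!\left(\sum_{k=1}^{l_n(x)} \mathbb{E}[B_k]\cdot \sup_{x' \in C_k^n} \mathbb{E}_{x'}[T_{\{0\}}\wedge T_{C_{k-1}^n}]\right),
\end{equation*}
where $B_k$ denotes the random number of particles born at level $C_k^n$ during one run and each transition of the embedded chain costs $O(1)$ function evaluations. Since $l_n(x) = O(n)$ by (\ref{eqn:level ind fun}), the advertised bound $O(n^{\beta_V+1})$ will follow once I establish two polynomial estimates: each per-particle cost is $O(n)$, and $\mathbb{E}[B_k] = O(n^{\beta_V-1})$ uniformly in $k$.

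For the per-particle cost, the affine form of the subsolution (\ref{eqn:usefulsub}) combined with the definition of the level sets forces every $x' \in C_k^n$ outside the target to satisfy $\varrho^T x' = O(n)$, and since each $\varrho_i>0$ this yields $\|x'\|_1 = O(n)$. A standard Lyapunov/drift argument for stable Jackson networks then produces $\mathbb{E}_{x'}[T_{\{0\}}] = O(\|x'\|_1) = O(n)$. For the expected particle count I would use the identity $\mathbb{E}[B_k] = r^{l_n(x)-k}\,\mathbb{P}_x(T_{C_k^n}<T_{\{0\}})$, which follows by linearity of expectation from the distributional equivalence between the standard and fully-branching representations, and then apply the time-reversal argument of Proposition 3 with $C_k^n$ substituted for $C_0^n$ to obtain
\begin{equation*}
\mathbb{P}_x(T_{C_k^n}<T_{\{0\}}) \;=\; O\!\left(\frac{\mathbb{P}(Q(\infty)\in C_k^n)}{\pi(x)}\right).
\end{equation*}
Evaluating this stationary probability using the product-form geometric distribution of $Q(\infty)$, in the spirit of Proposition 2, would yield $\mathbb{P}(Q(\infty)\in C_k^n) = O(r^{k-l_n(x)}\,n^{\beta_V-1})$, so $\mathbb{E}[B_k] = O(n^{\beta_V-1})$ uniformly in $k$. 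Combining the three polynomial factors produces $\mathbb{E}[W_n(x)] = O(n) \cdot O(n)\cdot O(n^{\beta_V-1}) = O(n^{\beta_V+1})$.

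The main obstacle is the uniform-in-$k$ polynomial correction $n^{\beta_V-1}$ appearing in the stationary bound on $\mathbb{P}(Q(\infty)\in C_k^n)$. At $k=0$ this is exactly Proposition 2 applied to the target, but for intermediate $k$ the level boundary is the hyperplane with normal $\varrho$ rather than $v$, so one must re-run the lattice-point computation and verify that the leading-order contribution is still captured by the $\beta_V$ bottleneck coordinates. Because the geometric decay rate $\rho_*^V$ along this hyperplane is attained precisely by concentrating mass on indices $i$ with $v_i=1$ and $\rho_i=\rho_*^V$, the polynomial factor stays at $n^{\beta_V-1}$ uniformly in $k$, closing the argument.
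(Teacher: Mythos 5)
Your decomposition is the same one the paper uses: sum over levels the expected number of particles born there times the expected work each performs before reaching the next level or dying; bound the per-particle work by an expected absorption time $O(\|x'\|_1)=O(n)$ via a drift argument; and bound the expected particle count at level $k$ by $r^{l_n(x)-k}$ times the hitting probability of $C_k^n$, estimated by time reversal against $\pi$. (The paper keeps the sharper generation-dependent bounds --- positions of norm $O(m)$ and particle counts $O(m^{\beta_V-1})$ at generation $m$, summing $m^{\beta_V}$ --- but your cruder uniform-in-$k$ versions would give the same exponent. One small imprecision: the supremum must run over the positions at which particles are actually born at level $k$, i.e.\ within one transition of the boundary of $C_k^n$, not over all of $C_k^n$, which is unbounded in the directions transverse to $v$; only birth positions satisfy $\varrho^T x'=O(n)$.)

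The genuine gap is the uniform bound $\mathbb{E}[B_k]=O(n^{\beta_V-1})$, and specifically your proposed resolution of what you correctly identify as the main obstacle. The intermediate levels are half-spaces with normal $\varrho$, and on the hyperplane $\{z:\varrho^Tz=t\}$ the product-form stationary measure $\pi(z)\propto\prod_i\rho_i^{z_i}=e^{-\varrho^Tz}$ is \emph{constant}: no coordinate is favored, so the stationary mass of the slab $\{t\le\varrho^Tz\le t+O(1)\}$ is $e^{-t}$ times a lattice-point count over a full $(d-1)$-dimensional slice, i.e.\ of order $t^{d-1}e^{-t}$. Equivalently, each summand $\varrho_iQ_i(\infty)$ has tail exactly $e^{-s}$ whether or not station $i$ is a bottleneck or even belongs to the target subset, so all $d$ coordinates contribute to the polynomial prefactor. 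The concentration-on-bottlenecks effect that yields $n^{\beta_V-1}$ is specific to the target $\{V(z)=n\}$, where coordinates are weighted by the $0$--$1$ vector $v$ and the geometric rates genuinely differ across summands; it does not transfer to $\varrho$-weighted level sets, so the assertion that "the polynomial factor stays at $n^{\beta_V-1}$ uniformly in $k$" is not justified by the argument you give, and re-running the lattice computation as you propose produces a larger exponent. This estimate is precisely the content of the Lemma embedded in the proof of Proposition \ref{prop: 2nd mom upb} (the negative-binomial decomposition of $\hat\alpha^TQ(\infty)$, whose exponent is governed by the bottleneck count of the \emph{whole network}, not of the target subset --- already a sign that $\beta_V$ cannot emerge from the intermediate levels in the way you describe). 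A complete proof of the present proposition must either invoke that Lemma or reprove it; your sketch does neither, so the argument does not close.
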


\begin{proof}
To see this, let $N_{m}^{n},\,m=0,....,l^{n}\left(  x\right)  $, be
the number of particles that survive to level $C_{l^{n}\left(
x\right)  -m}^{n}$. Also let $\eta_{m,j}$ be the remaining
computational effort of the $j$-th particle at the start of the
$m$-th level until it either reaches the next level or it dies out.
Put $\bar{\eta}_{m,j}\left(x_j\right)$ to be the expectation of
$\eta_{m,j}$ given that the position of the $j$-th particle at the
start of level $m$ is $x_j$. Note that the norm of the position of
$x_j$ is less than $c \cdot m$ for a given constant $c$ that depends
on the traffic intensities of the system but not on the position of
the particle per-se. Therefore, it is easy to see that
\[
\sup_{1\leq j\leq N^n_{m}}\bar{\eta}_{m,j}\left(x_j\right)\leq
c\cdot m,
\]
for some $c\in\left(0,\infty\right)$. Intuitively, each particle at level $m$ either advances to the next
level, or it dies out by hitting the zero level before moving to the
next one, since it takes $\Theta\left( 1\right) $ work to cross one
single layer, $\eta_{m,j}$ is dominated by the work required to die
out, and hence its mean is bounded from above by $c \times m$ for some
constant $c$. The expected total work per run is then given by
\begin{align*}
{\mathbb{E}}\left[  \sum_{m=0}^{l_{n}\left(  x\right)  -1}\sum_{j=1}%
^{N_{m}^{n}}\eta_{m,j}\right] &
=\sum^{l_n\left(x\right)-1}_{m=0}\E\left[\sum^{N^n_m}_{j=1}\bar{\eta}_{m,j}\left(x_j\right)\right]
\\
& \leq \sum_{m=0}^{l_{n}\left(  x\right)  -1}\E\left[N^n_m\right]\cdot c\cdot m \\
& \leq c\cdot  \sum_{m=0}^{l_{n}\left(  x\right)  -1}\left(
\frac{m}{C}\right)
^{\beta_{V}-1}\left(  \rho_{\ast}^{V}\right)  ^{\frac{m}{C}}r^{m}m  \\
& \leq c\cdot  \sum_{m=0}^{l_{n}\left(  x\right)  -1}\left(
\frac{m}{C}\right) ^{\beta_{V}}  =O\left( n^{\beta_{V}+1}\right) ,
\end{align*}
for some positive constant $c$ where the first inequality is due to
independence.
\end{proof}

To facilitate the analysis of the second moment of $R_{n}\left(  x\right)  $
we add the following notations. We follow the analysis in \cite{DeanDup09} to
make our exposition here self-contained. For a given generation $m$, denote by
$Q_{m,j}$ the position of the $j$th particle; recall that the accumulated
weight up to the $m$th stage of such a particle is $r^{m}$. Let $\chi_{m,j}$
be the disjoint grouping of particles in the next generation (i.e., $m+1$)
according to their \textquotedblleft parents\textquotedblright\ in generation
$m$. For $k\in\chi_{m,j}$, denote by $d_{k}$ the offsprings of this particle
at the final stage $l_{n}\left(  x\right)  $. We then have the following
expansion of the second moment of $R_{n}\left(  x\right)  $:
\begin{align}
&  {\mathbb{E}}_{x}\left[  \left(  \sum_{j=1}^{r^{l_{n}\left(  x\right)  }%
}I_{j}\,{r^{-l_{n}\left(  x\right)  }}\right)  ^{2}\right]
\label{eqn:expansion of 2nd mom}\\
&  =\sum_{m=0}^{l_{n}\left(  x\right)  -1}{\mathbb{E}}_{x}\left[  \sum
_{j=1}^{r^{m}}\,\sum_{k,l\in\chi_{m,j},k\neq l}\left(  \sum_{m_{k}\in d_{k}%
}I_{m_{k}}r^{-l_{n}\left(  x\right)  }\right)  \left(  \sum_{m_{l}\in d_{l}%
}I_{m_{l}}r^{-l_{n}\left(  x\right)  }\right)  \right]  +{\mathbb{E}}%
_{x}\left[  \sum_{j=1}^{r^{l_{n}\left(  x\right)  }}I_{j}\,{r^{-2l_{n}\left(
x\right)  }}\right]  ,\nonumber
\end{align}
where we define $I_{m_{k}}$ to be the indicator function of the event that
particle $m_{k}$ is in the set $C_{0}^{n}$. The second term above is
essentially the diagonal terms of the second moment
(\ref{eqn:expansion of 2nd mom}), and for the off-diagonal terms, for each
generation, we categorize particles according to their common ancestors, a
technique used by \cite{DeanDup09}. For the first term, we have
\begin{align}
&  \sum_{m=0}^{l_{n}\left(  x\right)  -1}{\mathbb{E}}_{x}\left[  \sum
_{j=1}^{r^{m}}\,\sum_{k,l\in\chi_{m,j},k\neq l}\left(  \sum_{m_{k}\in d_{k}%
}I_{m_{k}}r^{-l_{n}\left(  x\right)  }\right)  \left(  \sum_{m_{l}\in d_{l}%
}I_{m_{l}}r^{-l_{n}\left(  x\right)  }\right)  \right]
\nonumber\label{eqn:expand first term1}\\
&  =\sum_{m=0}^{l_{n}\left(  x\right)  }{\mathbb{E}}_{x}\left[  \sum
_{j=1}^{r^{m}}I\left(  V\left(  Q_{m,j}\right)  >0\right)  \left(
r^{-m}\right)  ^{2}\right. \nonumber\\
&  \cdot\left.  \sum_{k,l\in\chi_{m,j},k\neq l}\left(  \frac{1}{r}\sum
_{m_{k}\in d_{k}}I_{m_{k}}r^{-\left(  l_{n}\left(  x\right)  -m-1\right)
}\right)  \left(  \frac{1}{r}\sum_{m_{l}\in d_{l}}I_{m_{l}}r^{-\left(
l_{n}\left(  x\right)  -m-1\right)  }\right)  \right]  .\nonumber
\end{align}
Conditioning on the whole genealogy up to step $m$, we obtain
\begin{align*}
&  {\mathbb{E}}_{x}\left[  \sum_{j=1}^{r^{m}}I\left(  V\left(  Q_{m,j}\right)
>0\right)  \left(  r^{-m}\right)  ^{2}\right. \\
&  \cdot\left.  \sum_{k,l\in\chi_{m,j},k\neq l}\left(  \frac{1}{r}\sum
_{m_{k}\in d_{k}}I_{m_{k}}r^{-\left(  l_{n}\left(  x\right)  -m-1\right)
}\right)  \left(  \frac{1}{r}\sum_{m_{l}\in d_{l}}I_{m_{l}}r^{-\left(
l_{n}\left(  x\right)  -m-1\right)  }\right)  \right] \\
&  ={\mathbb{E}}_{x}\left[  \sum_{j=1}^{r^{m}}I\left(  V\left(  Q_{m,j}%
\right)  >0\right)  \left(  r^{-m}\right)  ^{2}\right. \\
&  \cdot\left.  {\mathbb{E}}\left(  \sum_{k,l\in\chi_{m,j},k\neq l}\left(
\frac{1}{r}\sum_{m_{k}\in d_{k}}I_{m_{k}}r^{-\left(  l_{n}\left(  x\right)
-m-1\right)  }\right)  \left(  \frac{1}{r}\sum_{m_{l}\in d_{l}}I_{m_{l}%
}r^{-\left(  l_{n}\left(  x\right)  -m-1\right)  }\right)  \right)  \right] \\
&  ={\mathbb{E}}_{x}\left[  \sum_{j=1}^{r^{m}}I\left(  V\left(  Q_{m,j}%
\right)  >0\right)  r^{-2m}\right. \\
&  \cdot\left.  \sum_{k,l\in\chi_{m,j},k\neq l}\left(  \frac{1}{r}{\mathbb{E}%
}_{\bar{X}_{m,j}}\left(  \sum_{m_{k}\in d_{k}}I_{m_{k}}r^{-\left(
l_{n}\left(  x\right)  -m-1\right)  }\right)  \frac{1}{r}{\mathbb{E}}_{\bar
{X}_{m,j}}\left(  \sum_{m_{l}\in d_{l}}I_{m_{l}}r^{-\left(  l_{n}\left(
x\right)  -m-1\right)  }\right)  \right)  \right]  .
\end{align*}
Note that ${\mathbb{E}}_{Q_{m,j}}\left[  \sum_{m_{k}\in d_{k}}I_{m_{k}%
}r^{-\left(  l_{n}\left(  x\right)  -m-1\right)  }\right]  =p_{n}^{V}\left(
Q_{m,j}\right)  $, and $\mathcal{W}=\sum_{k\neq l}r^{-2}=r/\left(  1-r\right)
$. Adding over $m$ we obtain
\begin{align*}
&  {\mathbb{E}}_{x}\left[  \left(  \sum_{j=1}^{r^{l_{n}\left(  x\right)  }%
}I_{j}\,{r^{-l_{n}\left(  x\right)  }}\right)  ^{2}\right] \\
&  =\mathcal{W}\sum_{m=0}^{l_{n}\left(  x\right)  -1}{\mathbb{E}}_{x}\left[
\sum_{j=1}^{r^{m}}I\left(  V\left(  Q_{m,1}\right)  >0\right)  r^{-2m}%
p_{n}^{V}\left(  Q_{m,j}\right)  ^{2}\right] \\
&  =\mathcal{W}\sum_{m=0}^{l_{n}\left(  x\right)  -1}r^{-m}{\mathbb{E}}%
_{x}\left[  I\left(  V\left(  Q_{m,1}\right)  >0\right)  p_{n}^{V}\left(
Q_{m,j}\right)  ^{2}\right]  .
\end{align*}
Combining this with the diagonal term in (\ref{eqn:expansion of 2nd mom}),
which can be readily expressed as $r^{-l_{n}\left(  x\right)  }p_{n}%
^{V}\left(  x\right)  $, we arrive at the following expansion for the second
moment of $R_{n}\left(  x\right)  $:
\begin{equation}
{\mathbb{E}}_{x}\left[  R_{n}\left(  x\right)  ^{2}\right]  =\mathcal{W}%
\sum_{m=0}^{l_{n}\left(  x\right)  -1}r^{-m}{\mathbb{E}}_{x}\left[  I\left(
V\left(  Q_{m,1}\right)  >0\right)  p_{n}^{V}\left(  Q_{m,1}\right)
^{2}\right]  +r^{-l_{n}\left(  x\right)  }p_{n}^{V}\left(  x\right)  .
\label{eqn:expand 2nd mom, fin}%
\end{equation}
The next result takes advantage of expression (\ref{eqn:expand 2nd mom, fin})
to obtain an upper bound for ${\mathbb{E}}_{x}\left[  R_{n}\left(  x\right)
^{2}\right]  $.

\begin{proposition}
\label{prop: 2nd mom upb} The second moment of $R_{n}\left(  x\right)  $
satisfies
\begin{equation}
{\mathbb{E}}\left[  R_{n}\left(  x\right)  \right]  ^{2}=p_{n}^{V}\left(
x\right)  ^{2}\,O\left(  n^{\beta}\right)  . \label{eqn:splitting 2nd mom}%
\end{equation}
where $\beta=\sum_{i=1}^{d}I\left(  \rho_{i}=\rho_{\ast}\right)  $ is the
number of bottleneck stations in the whole network.
\end{proposition}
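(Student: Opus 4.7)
The plan is to start from the second-moment expansion (\ref{eqn:expand 2nd mom, fin}), separating ${\mathbb E}_x[R_n(x)^2]$ into the intermediate-level sum and the diagonal term $r^{-l_n(x)} p_n^V(x)$. The diagonal term is easy: using the explicit formula for $l_n(x)$ together with the asymptotics of Propositions~1 and~2, one gets $r^{-l_n(x)} = \Theta(p_n^V(x)/n^{\beta_V-1})$, so the diagonal equals $\Theta(p_n^V(x)^2/n^{\beta_V-1})$, which sits comfortably inside the target $O(p_n^V(x)^2 n^\beta)$.

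For the main sum $\mathcal{W} \sum_{m=0}^{l_n(x)-1} r^{-m} {\mathbb E}_x[I(V(Q_{m,1})>0) p_n^V(Q_{m,1})^2]$, I would first invoke the upper bound of Proposition~1, $p_n^V(y)^2 \leq K^2 [{\mathbb P}(V(Q(\infty))=n)]^2/\pi(y)^2$, together with the closed form $\pi(y)=c\exp(-\varrho^T y)$ and the asymptotic in Proposition~2. This reduces the task to bounding
\[
T = \sum_{m=0}^{l_n(x)-1} r^{-m}\,{\mathbb E}_x\bigl[I(V(Q_{m,1})>0)\,\exp(2\varrho^T Q_{m,1})\bigr].
\]

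On the event that the particle survives to generation $m$, it lies in $C_{l_n(x)-m}^{n}$, so the affine subsolution forces $\varrho^T Q_{m,1} \geq \varrho^T x + m\log r - O(1)$, while the bounded-step Markov chain ensures the actual value of $\varrho^T Q_{m,1}$ does not overshoot this threshold by more than $O(1)$ with high probability. To translate this into a sharp estimate, I would invoke the time-reversal identity from the preceding proposition: rewriting the joint hitting distribution of $Q$ on the level-set shell $C_{l_n(x)-m}^{n}$ as a reverse-chain hitting probability into $\{x\}$ started from $\pi$ restricted to the shell, the expectation becomes bounded by the stationary mass of the shell (divided by $\pi(x)$ and rescaled). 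The geometric product structure of $\pi$, combined with the fact that the relevant cross-section is transverse to the $\varrho$-direction, produces a polynomial correction of order $n^{\beta-1}$: only the $\beta$ bottleneck coordinates contribute to the slowest exponential decay. Cancellation of the $r^{\pm m}$ factors and summation over the $l_n(x)=O(n)$ generations then gives $T = O(n^{\beta})$, and reassembling with the Propositions~1-2 prefactors yields the claim.

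The principal obstacle is carrying out the time-reversal step with enough precision to isolate the sharp exponent $\beta-1$ in the stationary mass of the level-set shell, rather than a weaker bound involving the full dimension $d-1$. This requires an argument analogous to the derivation of (\ref{eqn:rep_overflow}), iterated across intermediate levels and combined with a bottleneck-aware analysis of the geometric product measure along the lines of \cite{Blanchet09}. Tracking ceiling corrections in $l_n(x)$ and boundary effects due to the Markov chain's constrained dynamics introduces only harmless $O(1)$ multiplicative factors.
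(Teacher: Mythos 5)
Your proposal follows essentially the same route as the paper's proof: the same decomposition of (\ref{eqn:expand 2nd mom, fin}), the same use of Proposition~1 to replace $p_n^V(Q_{m,1})^2$ by $[{\mathbb P}(V(Q(\infty))=n)]^2/\pi(Q_{m,1})^2$, the same observation that level membership plus bounded increments pins $\varrho^T Q_{m,1}$ to an $O(1)$ window around $\varrho^T x + m\log r$, and the same time-reversal bound of the level-$m$ survival probability by the stationary mass of the shell, evaluated via the product-geometric/negative-binomial decomposition into bottleneck and non-bottleneck coordinates to extract the exponent $\beta-1$ (this is exactly the paper's Lemma~1, resting on Proposition~3 of \cite{Blanchet09}). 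The step you flag as the principal obstacle is precisely the one the paper resolves this way, so the outline is correct and matches the published argument.
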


In order to prove the previous result, we will show that the second moment of
$R_{n}\left(  x\right)  $ is dominated by the first item on the righthand side
of the equality in (\ref{eqn:expand 2nd mom, fin}). In turn, the asymptotic
behaviour of such term hinges on the conditional distribution of the exact
position of the particle in generation $m$, $Q_{m,1}$ in $C^{n}_{l_{n}\left(
x\right)  -m}$.

\begin{proof}
We begin the proof with an important property implied by the splitting
algorithm:
\begin{align}\label{eqn:splitting observation}
V\left(  Q_{m,1}\right)  >0  & \Leftrightarrow Q_{m,1}\in
C_{l_{n}\left(
x\right)  -m}^{n}=nL_{\left(  l_{n}\left(  x\right)  -m\right)  \Delta/n}\nonumber\\
& \Leftrightarrow Q_{m,1}\in\{z\in nD_{n}:U\left(  z/n\right)  \leq\left(
l_{n}\left(  x\right)  -m\right)  \Delta/n\}\nonumber\\
& \Leftrightarrow Q_{m,1}\in\{z\in nD_{n}:C\left(  \Delta-\frac{1}{n}%
\alpha^{T}z\Delta\right)  \leq\frac{\Delta}{n}\left(  \lceil C\left(
n-\alpha^{T}x\right)  \rceil-m\right)  \}\nonumber\\
& \Rightarrow Q_{m,1}\in\{z\in nD_{n}:C\left(  1-\frac{1}{n}\alpha
^{T}z\right)  \leq\frac{1}{n}\left(  C\left(  n-\alpha^{T}x\right)
-m+1\right)  \}\nonumber\\
& \Leftrightarrow Q_{m,1}\in\{z\in nD_{n}:\alpha^{T}z\geq\alpha^{T}%
x+\frac{m-1}{C}\}\nonumber\\
& \Leftrightarrow Q_{m,1}\in\{z\in
nD_{n}:\varrho^{T}z\leq\varrho^{T}x-\left( m-1\right)  \log r\}
\end{align}
where we used the representations of $U\left(  \cdot\right)  $ and
$l^{n}\left(  x\right)  $ in (\ref{eqn:importance function}) and
(\ref{eqn:level ind fun}) and the definition of $L_{z}$ in
(\ref{eqn:level fun}). In other words, if a particle survives $m$
generations then its current position is beyond the $m$th
level, which implies that the weighted sum of system population,
with weight given by the vector $\varrho$, is bounded from above by
that of the initial position adjusted by a
linear function in $m$. If we define the stopping time $\hat{T}_{\frac{m}{C}%
}\triangleq\inf\{k\geq1:\alpha^{T}Q\left(  k\right)  \geq\alpha^{T}%
x+\frac{m-1}{C}\}=\inf\{k\geq1:\varrho^{T}Q\left(  k\right)  \leq\varrho
^{T}x-\left(  m-1\right)  \log{r}\}$, the above property also implies that
$V\left(  Q_{m,1}\right)  >0\Rightarrow\hat{T}_{\frac{m}{C}}<T_{0}$. Now, the
expectation term in the sum of (\ref{eqn:expand 2nd mom, fin}) can be
expressed as
\begin{align}\label{eqn:2nd moment dom}
&  {\mathbb{E}}_{x}\left[  I\left(  V\left(  Q_{m,1}\right)
>0\right)
p_{n}^{V}\left(  Q_{m,1}\right)  ^{2}\right]  \nonumber\\
\leq &  {\mathbb{E}}_{x}\left[  I\left(  \varrho^{T}Q_{m,1}\leq\varrho
^{T}x-\left(  m-1\right)  \log{r}\right)  p_{n}^{V}\left(  Q_{m,1}\right)
^{2}\right]  \nonumber\\
= &  {\mathbb{E}}_{x}\left[  p_{n}^{V}\left(  Q_{m,1}\right)
^{2}|\varrho
^{T}Q_{m,1}\leq\varrho^{T}x-\left(  m-1\right)  \log{r}\right]  {\mathbb{P}%
}_{x}\left(  \hat{T}_{\frac{m}{C}}<T_{0}\right)  \nonumber\\
= &  {\mathbb{E}}_{x}\left[  {\mathbb{P}}_{x}^{2}\left(  T_{n}^{V}%
<T_{0}|Q_{\hat{T}_{\frac{m}{C}}}\right)  |\hat{T}_{\frac{m}{C}}<T_{0}%
,\varrho^{T}Q_{m,1}\leq\varrho^{T}x-\left(  m-1\right)
\log{r}\right] {\mathbb{P}}_{x}\left(
\hat{T}_{\frac{m}{C}}<T_{0}\right)
\end{align}
where we used the property derived in (\ref{eqn:splitting observation}). For
the first item in (\ref{eqn:2nd moment dom}), we have
\begin{align}
\label{eqn:2nd moment dom2}
&  {\mathbb{E}}_{x}\left[  {\mathbb{P}}_{x}^{2}\left(  T_{n}^{V}<T_{0}%
|Q_{\hat{T}_{\frac{m}{C}}}\right)  |\hat{T}_{\frac{m}{C}}<T_{0},\varrho
^{T}Q_{m,1}\leq\varrho^{T}x-\left(  m-1\right)  \log{r}\right]  \nonumber\\
\leq & K {\mathbb{E}}\left[  \frac{\pi^{2}\left(  C_{0}^{n}\right)
}{\pi ^{2}\left(  Q_{m,1}\right)
}|\varrho^{T}Q_{m,1}\leq\varrho^{T}x-\left(
m-1\right)  \log{r}\right]  \nonumber\\
\leq &  c_{1}\left[  n^{\beta_{V}-1}\left(  \rho_{\ast}^{V}\right)
^{n}\right]  ^{2}{\mathbb{E}}_{\pi}\left[  e^{-2\varrho^{T}Q_{m,1}}%
|\varrho^{T}Q_{m,1}\leq\varrho^{T}x-\left(  m-1\right)
\log{r}\right]
\end{align}
where $c_{1},K$ are some constants independent of $n$. Here we used Propositions 1
and 2 for the the last two inequalities respectively. As for the expectation
term in (\ref{eqn:2nd moment dom2}), since the process $Q\left(  \cdot\right)
$ has for each dimension an increment at most of unit size, we can write
\begin{align}\label{eqn:2nd moment dom cond}
& {\mathbb{E}}_{\pi}\left[  e^{-2\varrho^{T}Q_{m,1}}|\varrho^{T}Q_{m,1}%
\leq\varrho^{T}x-\left(  m-1\right)  \log{r}\right]  \nonumber\\
& ={\mathbb{E}}_{\pi}\left[  e^{-2\varrho^{T}Q_{m,1}}|\varrho^{T}x-\left(
m-1\right)  \log{r}-\delta\leq\varrho^{T}Q_{m,1}\leq\varrho^{T}x-\left(
m-1\right)  \log{r}\right]  \nonumber\\
& \leq c_{2}\exp\left(  -2\varrho^{T}x+2\left(  m-1\right)
\log{r}\right)
\nonumber\\
& =c_{3}\exp\left(  -2\frac{m-1}{C}\log\rho_{\ast}^{V}\right)
=c_{3}\left( \rho_{\ast}^{V}\right)  ^{-2\frac{m-1}{C}},
\end{align}
where $c_{2},c_{3}$ and $\delta$ are some positive constants. It remains to
give a bound on the term ${{\mathbb{P}}}_{x}\left(  \hat{T}_{\frac{m}{C}%
}<T_{0}\right)  $. As a result of Proposition 2,
\begin{align*}
& {\mathbb{P}}_{x}\left(  \hat{T}_{\frac{m}{C}}<T_{0}\right)  \\
& \leq\frac{1}{\pi\left(  x\right)  }{\mathbb{P}}\left[  \varrho^{T}Q\left(
\infty\right)  \leq\varrho^{T}x-\left(  m-1\right)  \log{r}\right]  \\
& =\frac{1}{\pi\left(  x\right)  }{\mathbb{P}}\left[  \hat{\alpha}^{T}Q\left(
\infty\right)  \geq\hat{\alpha}^{T}x+\frac{\left(  m-1\right)  }{\hat{C}%
}\right]
\end{align*}
where $\hat{C}=-\log\rho_{\ast}/\log{r}$, $\rho_{\ast}=\max_{i}\rho_{i}%
\in\left(  0,1\right)  $ and $\hat{\alpha}=\varrho\,/\rho_{\ast}=\left(
\log\rho_{1}/\log\rho_{\ast},...,\log\rho_{d}/\log\rho_{\ast}\right)  ^{T}$.
Note that $\hat{\alpha}_{i}\in\left(  0,1\right)  $. To finish the proof we
need the following Lemma.
\begin{lemma}%
\begin{align*}
{\mathbb{P}}\left[  \hat{\alpha}^{T}Q\left(  \infty\right)  \geq\hat{\alpha
}^{T}x+\frac{\left(  m-1\right)  }{\hat{C}}\right]    & =\Theta\left[\mathbb{P}\left(
Z\left(  \beta,1-\rho_{\ast}\right)  \geq\frac{m-1}{\hat{C}}\right)\right]  \\
& =\Theta\left[  \left(  \frac{m-1}{\hat{C}}\right)  ^{\beta-1}\left(
\rho_{\ast}\right)  ^{\frac{m-1}{\hat{C}}}\right]
\end{align*}
where $Z\left(  n,p\right)  $ denotes a $NBin\left(  n,p\right)  $ (negative
binomial) random variable.
\end{lemma}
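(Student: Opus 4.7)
\medskip

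The plan is to reduce the event $\{\hat\alpha^T Q(\infty) \geq L\}$ (where $L := \hat\alpha^T x + (m-1)/\hat C$) to a tail event for a negative binomial random variable, by isolating the bottleneck coordinates and showing the non-bottleneck coordinates contribute only a constant factor. Recall from Proposition 2 and the product form of $\pi$ that the components $Q_i(\infty)$ are independent with $\Prob(Q_i(\infty)=k)=(1-\rho_i)\rho_i^k$. Let $B=\{i:\rho_i=\rho_\ast\}$, so $|B|=\beta$ and $\hat\alpha_i=1$ on $B$, while $\rho_i<\rho_\ast$ strictly for $i\notin B$. Setting $S_B:=\sum_{i\in B}Q_i(\infty)$, the convolution of $\beta$ i.i.d.\ $\text{Geom}(1-\rho_\ast)$'s gives $S_B\sim\mathrm{NBin}(\beta,1-\rho_\ast)$, and the identity $\binom{t+\beta-1}{\beta-1}=\Theta(t^{\beta-1})$ together with the geometric series yields the second equality in the lemma: $\Prob(S_B\geq t)=\Theta(t^{\beta-1}\rho_\ast^t)$. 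It remains to prove the first equality $\Prob(\hat\alpha^T Q(\infty)\geq L)=\Theta(\Prob(S_B\geq L))$.

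The lower bound is immediate: restrict to $\{Q_i(\infty)=0\text{ for all }i\notin B\}\cap\{S_B\geq L\}$, which by independence has probability $\prod_{i\notin B}(1-\rho_i)\cdot\Prob(S_B\geq L)$, and the prefactor is a positive constant independent of $n$ and $m$. For the upper bound I would condition on the non-bottleneck coordinates and apply the negative binomial tail estimate to the residual threshold for the bottleneck sum:
\[
\Prob(\hat\alpha^T Q(\infty)\geq L)\leq K\sum_{(k_i)_{i\notin B}}\prod_{i\notin B}(1-\rho_i)\rho_i^{k_i}\cdot\bigl(L-\textstyle\sum_{i\notin B}\hat\alpha_i k_i\bigr)_+^{\beta-1}\rho_\ast^{L-\sum\hat\alpha_i k_i}.
\]
Pulling out the factor $L^{\beta-1}\rho_\ast^{L}$ (noting $(L-\sum\hat\alpha_i k_i)_+^{\beta-1}\leq L^{\beta-1}$), what is left to bound is the product over $i\notin B$ of the geometric series $\sum_{k\geq0}(\rho_i\rho_\ast^{-\hat\alpha_i})^k$.

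Convergence of each such series is exactly the summability condition $\rho_i\rho_\ast^{-\hat\alpha_i}<1$ for every $i\notin B$, and this is the crux of the argument. It is a direct consequence of the definition of $\hat\alpha$: the exponent $\log\rho_i - \hat\alpha_i\log\rho_\ast$ is a simple algebraic function of $\rho_i,\rho_\ast$ which is strictly negative whenever $\rho_i<\rho_\ast$ (the linear tilt $\hat\alpha$ was chosen so that the MGF of $\hat\alpha^T Q(\infty)$ blows up at $\theta=\varrho_\ast$ only through the $\beta$ bottleneck factors). Once this strict inequality is in place, the product of geometric series is a finite constant and the upper bound becomes $O(L^{\beta-1}\rho_\ast^{L})$, matching the lower bound.

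Finally, to pass from $L$ to $(m-1)/\hat C$ note that in the splitting algorithm the particle positions $x$ stay in $nD_n$ with $\hat\alpha^T x$ bounded uniformly (up to $O(1)$) along the genealogy, so $L=(m-1)/\hat C + O(1)$ and the replacements $L^{\beta-1}\to ((m-1)/\hat C)^{\beta-1}$ and $\rho_\ast^L\to \rho_\ast^{(m-1)/\hat C}$ cost only constants. The main obstacle I expect is the summability check $\rho_i\rho_\ast^{-\hat\alpha_i}<1$, which, although a one-line inequality once $\hat\alpha$ is spelled out, is the whole reason the non-bottleneck coordinates contribute only a $\Theta(1)$ correction and therefore the whole reason the exponent in the tail is $\beta-1$ rather than $d-1$.
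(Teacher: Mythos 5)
Your overall strategy (reduce to a negative-binomial tail by isolating the bottleneck coordinates, then argue the non-bottleneck coordinates cost only a constant) is the same as the paper's, which writes $\hat{\alpha}^{T}Q(\infty)=Z(\beta,1-\rho_{\ast})+W$ and tries to stochastically dominate $W$ by a negative binomial with a strictly lighter tail. Your lower bound and the second $\Theta$-equality are fine. The step that fails is precisely the one you flag as the crux: the summability condition $\rho_{i}\rho_{\ast}^{-\hat{\alpha}_{i}}<1$ for $i\notin B$. The definition in force here is $\hat{\alpha}_{i}=\log\rho_{i}/\log\rho_{\ast}$ --- this is forced, because $\hat{\alpha}$ must be a scalar multiple of $\varrho$ for the preceding rewriting of the event $\{\varrho^{T}Q(\infty)\ \text{beyond level }m\}$ as $\{\hat{\alpha}^{T}Q(\infty)\geq\hat{\alpha}^{T}x+(m-1)/\hat{C}\}$ to be valid. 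With that definition,
\begin{equation*}
\rho_{i}\rho_{\ast}^{-\hat{\alpha}_{i}}=\exp\Bigl(\log\rho_{i}-\tfrac{\log\rho_{i}}{\log\rho_{\ast}}\log\rho_{\ast}\Bigr)=e^{0}=1
\end{equation*}
for \emph{every} $i$, so your geometric series does not converge; it is truncated at $k_{i}\leq L/\hat{\alpha}_{i}$ by the positive part in your tail estimate, and each non-bottleneck station therefore contributes a factor $\Theta(L)$ rather than $\Theta(1)$. Your method then yields $O(L^{d-1}\rho_{\ast}^{L})$, not $O(L^{\beta-1}\rho_{\ast}^{L})$. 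The heuristic you give (``the MGF blows up at the critical tilt only through the $\beta$ bottleneck factors'') is exactly backwards: since $\hat{\alpha}$ is a rescaling of $\varrho$, every coordinate satisfies $\mathbb{P}[\hat{\alpha}_{i}Q_{i}(\infty)\geq t]=\Theta(\rho_{\ast}^{t})$ --- that is the whole point of the rescaling --- so the sum of $d$ independent such variables has tail $\Theta(t^{d-1}\rho_{\ast}^{t})$. A two-station sanity check: $\rho_{1}=1/4$, $\rho_{2}=\rho_{\ast}=1/2$, $\beta=1$, gives $\hat{\alpha}^{T}Q(\infty)=2Q_{1}+Q_{2}$ and $\mathbb{P}[2Q_{1}+Q_{2}\geq t]=\Theta(t\,2^{-t})$, not $\Theta(2^{-t})$.

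You should know that the paper's own proof stumbles at the same spot: it asserts $\hat{\alpha}_{i}\in(0,1)$ and deduces $W\leq\sum_{i\notin B}Q_{i}(\infty)$, but in fact $\hat{\alpha}_{i}=\log\rho_{i}/\log\rho_{\ast}>1$ whenever $\rho_{i}<\rho_{\ast}$, so that domination fails for the same reason your series diverges. In other words, you have faithfully reproduced, rather than repaired, a genuine gap: with $\hat{\alpha}$ proportional to $\varrho$, the correct polynomial order in the Lemma is $t^{d-1}$, not $t^{\beta-1}$ (the two coincide only when all stations are bottlenecks). The discrepancy is not cosmetic, since propagating $m^{d-1}$ through the subsequent sum changes the second-moment bound from $p_{n}^{V}(x)^{2}O(n^{\beta})$ to $p_{n}^{V}(x)^{2}O(n^{d})$ and the overall complexity to $O(n^{\beta_{V}+d+1})$. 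To salvage the stated $\beta$-dependence one would need either a genuinely different functional of $Q(\infty)$ (one for which the non-bottleneck tilts are strictly subcritical, at the cost of losing the equivalence with the level-crossing event) or a sharper bound on the level-advance probability that does not pass through $\pi$ of a half-space.
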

\begin{proof}
Note that
\begin{align*}
\hat{\alpha}^{T}Q\left(  \infty\right)    & =Q\left(  \infty\right)  ^{T}%
\frac{\varrho}{\log\rho_{\ast}}\\
& =\sum_{i=1}^{d}Q_{i}\left(  \infty\right)  I\left(  \rho_{i}=\rho_{\ast
}\right)  +\sum_{i=1}^{d}Q_{i}\left(  \infty\right)  I\left(  \rho_{i}\neq
\rho_{\ast}\right)  \frac{\log\rho_{i}}{\log\rho_{\ast}}\\
& =Z\left(  \beta,1-\rho_{\ast}\right)  +W.
\end{align*}
One direction is elementary, since $\hat{\alpha}^{T}Q\left(  \infty\right)
\geq Z\left(  \beta,1-\rho_{\ast}\right)  $, we clearly have%
\begin{equation}
{\mathbb{P}}\left[  \hat{\alpha}^{T}Q\left(  \infty\right)  \geq\hat{\alpha
}^{T}x+\frac{\left(  m-1\right)  }{\hat{C}}\right]  \geq{\mathbb{P}}\left[
Z\left(  \beta,1-\rho_{\ast}\right)  \geq\hat{\alpha}^{T}x+\frac{\left(
m-1\right)  }{\hat{C}}\right]  .\label{lemma: lower bdd}%
\end{equation}
For the other direction, note that there exists a constant $\tilde{\rho}%
<\rho_{\ast}$ such that
\[
W=\sum_{i=1}^{d}Q_{i}\left(  \infty\right)  I\left(  \rho_{i}\neq\rho_{\ast
}\right)  \frac{\log\rho_{i}}{\log\rho_{\ast}}\leq\sum_{i=1}^{d}Q_{i}\left(
\infty\right)  I\left(  \rho_{i}\neq\rho_{\ast}\right)  =Z\left(
d-\beta,1-\tilde{\rho}\right)  .
\]
As a result,
\[
\hat{\alpha}^{T}Q\left(  \infty\right)  \leq_{st}Z\left(  \beta,1-\rho_{\ast
}\right)  +Z\left(  d-\beta,1-\tilde{\rho}\right)  ,
\]
where \textquotedblleft\ $\leq_{st}$\textquotedblright\ denotes that the left
hand side is stochastically dominated by the right hand side. But since
$1-\rho_{\ast}<1-\tilde{\rho}$, a similar argument as given by Proposition 3
in \cite{Blanchet09} allows us to obtain
\begin{equation}
{\mathbb{P}}\left[  \hat{\alpha}^{T}Q\left(  \infty\right)  \geq\hat{\alpha
}^{T}x+\frac{\left(  m-1\right)  }{\hat{C}}\right]  \leq c_{0}{\mathbb{P}%
}\left[  Z\left(  \beta,1-\rho_{\ast}\right)  \geq\hat{\alpha}^{T}%
x+\frac{\left(  m-1\right)  }{\hat{C}}\right]  ,\label{lemma: upper bdd}%
\end{equation}
for some finite constant $c_{0}$ that is independent of $m$. Combining
(\ref{lemma: lower bdd}) and (\ref{lemma: upper bdd}), we have
\begin{equation}
{\mathbb{P}}\left[  \hat{\alpha}^{T}Q\left(  \infty\right)  \geq\hat{\alpha
}^{T}x+\frac{\left(  m-1\right)  }{\hat{C}}\right]  =\Theta\left[
{\mathbb{P}}\left(  Z\left(  \beta,1-\rho_{\ast}\right)  \geq\hat{\alpha}%
^{T}x+\frac{\left(  m-1\right)  }{\hat{C}}\right)  \right]  .
\end{equation}
Using again Proposition 3 of \cite{Blanchet09}, we reach the conclusion that
\[
{\mathbb{P}}\left[  \hat{\alpha}^{T}Q\left(  \infty\right)  \geq\hat{\alpha
}^{T}x+\frac{\left(  m-1\right)  }{\hat{C}}\right]  =\Theta\left[  \left(
\frac{m-1}{\hat{C}}\right)  ^{\beta-1}\left(  \rho_{\ast}\right)  ^{\frac
{m-1}{\hat{C}}}\right]
\]
\end{proof}
\bigskip Going back to (\ref{eqn:2nd moment dom}), the above Lemma allows us
to write
\begin{equation}
{\mathbb{P}}_{x}\left(  \hat{T}_{\frac{m}{C}}<T_{0}\right)  \leq
c_{4}\left( \frac{m-1}{\hat{C}}\right)  ^{\beta-1}\left(
\rho_{\ast}\right)  ^{\frac
{m-1}{\hat{C}}}.\label{eqn:2nd moment dom 4}%
\end{equation}
If we combine (\ref{eqn:2nd moment dom2}), (\ref{eqn:2nd moment dom cond}) and
(\ref{eqn:2nd moment dom 4}), we obtain the following upper bound for the
expectation term in the sum of expression (\ref{eqn:expand 2nd mom, fin}):
\begin{align}\label{eqn:2nd moment dom 5}
{\mathbb{E}}_{x}\left[  I\left(  V\left(  Q_{m,1}\right)  >0\right)  p_{n}%
^{V}\left(  Q_{m,1}\right)  ^{2}\right]    & \leq c\,p_{n}^{V}\left(
x\right)  ^{2}\left(  \rho_{\ast}^{V}\right)  ^{-2\frac{m-1}{C}}\left(
\frac{m-1}{\hat{C}}\right)  ^{\beta-1}\left(  \rho_{\ast}\right)  ^{\frac
{m-1}{\hat{C}}}\nonumber\\
& =c\,p_{n}^{V}\left(  x\right)  ^{2}\left(  \rho_{\ast}^{V}\right)
^{-\frac{m-1}{C}}\left(  \frac{m-1}{\hat{C}}\right)  ^{\beta-1}\nonumber\\
& =c\,p_{n}^{V}\left(  x\right)  ^{2}r^{m-1}\left(  \frac{m-1}{\hat{C}%
}\right)  ^{\beta-1}%
\end{align}
where for the first equality we use the fact that $\rho_{\ast}^{V}\leq
\rho_{\ast}$ and $\hat{C}\leq C$, and for the second equality we use
$\rho_{\ast}^{V}=r^{-C}$. Putting the bound in (\ref{eqn:2nd moment dom 5})
back to the sum in the first item of (\ref{eqn:expand 2nd mom, fin}), we have
\begin{align}\label{eqn:first piece}
& \sum_{m=0}^{l_{n}\left(  x\right)  -1}r^{-m}{\mathbb{E}}_{x}\left[
I\left( V\left(  _{m,1}\right)  >0\right)  p_{n}^{V}\left(
_{m,1}\right)
^{2}\right]  \nonumber\\
& \leq cr^{-1}\sum_{m=0}^{l_{n}\left(  x\right)  -1}p_{n}^{V}\left(  x\right)
^{2}\left(  \frac{m-1}{\hat{C}}\right)  ^{\beta-1}\nonumber\\
& =p_{n}^{V}\left(  x\right)  ^{2}\,O\left(  n^{\beta}\right)
\end{align}
\indent Finally, note that the second item of (\ref{eqn:expand 2nd mom, fin})
is dominated by (\ref{eqn:first piece}), and it follows immediately that
\[
{\mathbb{E}}\left[  R_{n}\left(  x\right)  \right]  ^{2}=p_{n}^{V}\left(
x\right)  ^{2}O\left(  n^{\beta}\right)  .
\]
\smallskip
\end{proof}

Equipped with these results, we are ready to summarize our discussions in the
statement of the following Theorem, which is the main result of this paper.

\begin{theorem}
\label{thm: total complexity} To estimate the overflow probability $p_{n}%
^{V}\left(  x\right)  $ using $R_{n}\left(  x\right)  $, the number of
function evaluations needed for a given level of relative error is $O\left(
n^{\beta_{V}+\beta+1}\right)  $.
\end{theorem}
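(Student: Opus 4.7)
The plan is to combine the three pieces already in place: the general work-normalized efficiency formula from Section \ref{complexity}, the bound on the expected computational cost per replication from Proposition \ref{prop: effort per run}, and the second moment bound from Proposition \ref{prop: 2nd mom upb}. So the proof is essentially a bookkeeping exercise; no new dynamical or probabilistic input is required.

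First, I would recall the work-normalized criterion from equation (\ref{eqn: num of replication}): to achieve relative error at most $\epsilon$ with confidence $1-\delta$, it suffices to generate $m \geq \epsilon^{-2}\delta^{-1} cv_n^2$ independent replications of $R_n(x)$, where $cv_n^2 = \operatorname{Var}(R_n(x))/p_n^V(x)^2$. Since $\operatorname{Var}(R_n(x)) \leq \mathbb{E}[R_n(x)^2]$, Proposition \ref{prop: 2nd mom upb} gives
\[
cv_n^2 \;\leq\; \frac{\mathbb{E}[R_n(x)^2]}{p_n^V(x)^2} \;=\; O(n^\beta).
\]
Thus the number of required replications grows as $O(n^\beta)$ (absorbing the fixed constants $\epsilon^{-2}$ and $\delta^{-1}$ into the $O(\cdot)$ notation since they do not depend on $n$).

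Next, I would invoke Proposition \ref{prop: effort per run}, which guarantees that the expected number of elementary function evaluations needed to produce a single realization of $R_n(x)$ is $O(n^{\beta_V+1})$. Multiplying the per-replication work by the number of replications and using independence of the replications, the total expected work to obtain an estimator with the prescribed relative error is at most
\[
O(n^\beta)\cdot O(n^{\beta_V+1}) \;=\; O\!\left(n^{\beta+\beta_V+1}\right),
\]
which is the statement of the theorem.

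The only conceptual point worth emphasizing, and where one must be a little careful, is that the \emph{work-normalized} coefficient of variation is what governs complexity: one should not just invoke weak efficiency in isolation, but rather combine the bounds on $cv_n^2$ and on $\mathbb{E}[N_n(x)]$-weighted cost simultaneously. Since both pieces are already controlled polynomially in $n$, there is no substantive obstacle beyond stating the combination cleanly. I would close by noting the comparison with the direct linear algebraic solution at cost $O(n^{3d-2})$: because $\beta_V \leq \beta \leq d$, the splitting algorithm uses $O(n^{2d+1})$ function evaluations in the worst case, which is strictly smaller.
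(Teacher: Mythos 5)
Your proof is correct and follows essentially the same route as the paper: the paper's own proof simply cites the work-normalized criterion from Section \ref{complexity} and multiplies the per-replication cost bound of Proposition \ref{prop: effort per run} by the squared coefficient of variation bound implied by Proposition \ref{prop: 2nd mom upb}. Your version just makes the bookkeeping (Chebyshev, $\operatorname{Var} \leq \mathbb{E}[R_n^2]$, absorbing $\epsilon^{-2}\delta^{-1}$) explicit, which is fine.
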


\begin{proof}
Recall from section \ref{complexity} that the number of function evaluations
sufficient to achieve a pre-determined level of relative accuracy for the
Splitting estimator is proportional to the work-normalized squared coefficient
of variation. This is therefore immediate by combining the upper bound
analysis of the computational effort per run in Proposition
\ref{prop: effort per run} along with the upper bound of the second moment of
$R_{n}\left( x\right) $ available in Proposition \ref{prop: 2nd mom upb}.
\end{proof}

A direct comparison to the $O\left(  n^{3d-2}\right)  $ complexity of solving
a system of linear equations (see Section \ref{complexity} ) yields the
immediate conclusion that the Splitting algorithm is \textquotedblleft
efficient\textquotedblright\ in the sense that it improves over the
\textquotedblleft benchmark\textquotedblright\ polynomial algorithm. Even in
the worst case when we look at the total population of the network and the
network is totally symmetric, i.e., all stations are bottlenecks $\left(
\beta_{V}=\beta=d>3\right)  $, the number of flops needed is off by a
substantial factor $n^{d-3}$. In the case where $\beta_{V}=\beta=1$, the
algorithm only requires a number of function evaluations that at most grows
cubically in the level of overflow $n$. Furthermore, if the number of
bottlenecks is less than half of the total number of stations, i.e.
$\beta<d/2$, the Splitting algorithm enjoys a running time of order smaller
than $O\left(  n^{d}\right)  $, which is not worst than storing the vector
that encodes the solution to the associated linear system. If, on the other
hand, more than half of the stations are bottlenecks, faster importance
sampling based algorithms do exist at least for the case of tandem networks;
see the analysis in \cite{BlanLedGlyn09}, which implies that $O\left(
n^{2(d-\beta)+1}\right)  $ function evaluations suffice to obtain an estimator
with a given relative precision. Overall, the analysis thus provides some sort
of guidance on the choice of simulation algorithms. It is meaningful to point
out that the previous comparison is not based on the sharpest analysis. In
fact we only resort to a rather crude upper bound in the analysis of the
second moment of $R_{n}\left(  x\right)  $ in (\ref{eqn:2nd moment dom2}). A
sharper result is possible by bounding the expectation term in
(\ref{eqn:rep_overflow}) with more care. But as pointed out in the
Introduction, even though there is still room for a more refined analysis, we
believe our work provides substantial insights leading to a better
understanding of the relations between these two classes of
algorithms.\newline


\bigskip



\bibliographystyle{plain}
\bibliography{mod_prob}

\begin{thebibliography}{10}

\bibitem{ANAHEITSO90}
V.~Anantharam, P.~Heidelberger, and P.~Tsoucas.
\newblock Analysis of rare events in continuous time marked chains via time
  reversal and fluid approximation.
\newblock {\em IBM Research Report, REC 16280}, 1990.

\bibitem{AG94}
P.~Arbenz and W.~Gander.
\newblock A survey of direct parallel algorithms for banded linear systems.
\newblock Technical Report 221, Department Informatik,ETH Zurich, 1994.

\bibitem{ASGLYNN}
S.~Asmussen and P.~Glynn.
\newblock {\em Stochastic Simulation: Algorithms and Analysis}.
\newblock Springer-Verlag, New York, NY, USA, 2008.

\bibitem{Blanchet09}
J.~Blanchet.
\newblock Optimal sampling of overflow paths in jackson networks.
\newblock {\em forthcoming}, 2009.

\bibitem{BG07AAP}
J.~Blanchet and P.~Glynn.
\newblock Efficient rare-event simulation for the maximum of a heavy-tailed
  random walk.
\newblock {\em Ann. of Appl. Probab.}, 18:1351--1378, 2008.

\bibitem{BlanLedGlyn09}
J.~Blanchet, K.~Leder, and P.~Glynn.
\newblock Lyapunov functions and subsolutions for rare event simulation.
\newblock {\em Preprint}, 2009.

\bibitem{BMRT09}
J.~Blanchet and M.~Mandjes.
\newblock Rare event simulation for queues.
\newblock In G.~Rubino and B.~Tuffin, editors, {\em Rare Event Simulation Using
  Monte Carlo Methods}, pages 87--124. Wiley, West Sussex, United Kingdom,
  2009.
\newblock Chapter 5.

\bibitem{DeanDup09}
T.~Dean and P.~Dupuis.
\newblock Splitting for rare event simulation: A large deviation approach to
  design and analysis.
\newblock {\em Stochastic Processes and Its Applications}, (119):562--587.

\bibitem{DUPSEZWAN07}
P.~Dupuis, A.~Sezer, and H.~Wang.
\newblock Dynamic importance sampling for queueing networks.
\newblock {\em Ann. Appl. Probab.}, 17:1306--1346, 2007.

\bibitem{DUPWAN04}
P.~Dupuis and H.~Wang.
\newblock Importance sampling, large deviations, and differential games.
\newblock {\em Stoch. and Stoch. Reports}, 76:481--508, 2004.

\bibitem{DupWang08}
P.~Dupuis and H.~Wang.
\newblock Importance sampling for jackson networks.
\newblock {\em Preprint}, 2008.

\bibitem{glasserman99multilevel}
P.~Glasserman, P.~Heidelberger, P.~Shahabuddin, and T.~Zajic.
\newblock Multilevel splitting for estimating rare event probabilities, 1999.

\bibitem{GLAKOU95}
P.~Glasserman and S.~Kou.
\newblock Analysis of an importance sampling estimator for tandem queues.
\newblock {\em ACM TOMACS}, 5:22--42, 1995.

\bibitem{JunejaNicola05}
S.~Juneja and V.~Nicola.
\newblock Efficient simulation of buffer overflow probabilities in jackson
  networks with feedback.
\newblock {\em ACM Trans. Model. Comput. Simul.}, 15(4):281--315, 2005.

\bibitem{JUNSHA06}
S.~Juneja and P.~Shahabuddin.
\newblock Rare event simulation techniques: An introduction and recent
  advances.
\newblock In S.~G. Henderson and B.~L. Nelson, editors, {\em Simulation},
  Handbooks in Operations Research and Management Science. Elsevier, Amsterdam,
  The Netherlands, 2006.

\bibitem{KroeseNicola}
D.~Kroese and V.~Nicola.
\newblock Efficient simulation of a tandem jackson network.
\newblock {\em ACM Trans. Model. Comput. Simul.}, 12:119--141, 2002.

\bibitem{MajRamanan08}
K.~Majewski and K.~Ramanan.
\newblock How large queues build up in a jackson network.
\newblock {\em To Appear in Math. of O.R.}, 2008.

\bibitem{VAVA91}
M.Villen-Altamirano and J.~Villen-Altamirano.
\newblock Restart: A method for accelerating rare even simulations.
\newblock In J.W. Colhen and C.D. Pack, editors, {\em Proceedings of the 13th
  International Teletraffic Congress. In Queueing, performance and control in
  ATM}, pages 71--76. Elsevier Science Publishers, 1993.

\bibitem{NicolaZ07}
V.~Nicola and T.~Zaburnenko.
\newblock Efficient importance sampling heuristics for the simulation of
  population overflow in jackson networks.
\newblock {\em ACM Trans. Model. Comput. Simul.}, 17(2), 2007.

\bibitem{PARWAL89}
S.~Parekh and J.~Walrand.
\newblock Quick simulation of rare events in networks.
\newblock {\em IEEE Trans. Automat. Contr.}, 34:54--66, 1989.

\bibitem{RobertQN}
P.~Robert.
\newblock {\em Stochastic Networks and Queues}.
\newblock Springer-Verlag, Berlin, 2003.

\bibitem{Villen94}
M.~Vill{\'e}n-Altamirano and J.~Vill{\'e}n-Altamirano.
\newblock Restart: a straightforward method for fast simulation of rare events.
\newblock In {\em Winter Simulation Conference}, pages 282--289, 1994.

\end{thebibliography}

\end{document}